\newcommand{\N}{\mathbb{N}}
\newcommand{\Z}{\mathbb{Z}}
\newcommand{\Q}{\mathbb{Q}}
\newcommand{\R}{\mathbb{R}}
\newcommand{\C}{\mathbb{C}}
\newcommand{\F}{\mathbb{F}}
\newcommand{\coef}[1][{Q,-k}]{C({#1})}
\newcommand{\coeff}[1][{Q}]{C'({#1})}
\newcommand{\diag}{\operatorname{diag}}
\newcommand{\legendre}[2]{\left(\frac{#1}{#2}\right)}
\newcommand{\ba}{\backslash}
\newcommand{\op}{\operatorname}
\newcommand{\M}{\mathrm{M}}
\newcommand{\GL}{\mathrm{GL}}
\newcommand{\SL}{\mathrm{SL}}
\newcommand{\U}{\mathrm{U}}
\newcommand{\SO}{\mathrm{SO}}
\newcommand{\SU}{\mathrm{SU}}
\newcommand{\PSO}{\mathrm{PSO}}
\theoremstyle{plain}
\newtheorem{theorem}{Theorem}[section]
\newtheorem{corollary}[theorem]{Corollary}
\newtheorem{lemma}[theorem]{Lemma}
\newtheorem{conjecture}[theorem]{Conjecture}
\theoremstyle{definition}
\newtheorem{definition}[theorem]{Definition}
\theoremstyle{remark}
\newtheorem{remark}[theorem]{Remark}
\newtheorem{example}[theorem]{Example}
\numberwithin{equation}{section}
\numberwithin{theorem}{section}
\title[A numerical study on exceptional eigenvalues]
{A numerical study on exceptional eigenvalues of certain congruence subgroups of $\SO(n,1)$ and $\SU(n,1)$}
\author{Emilio A. Lauret}
\address{FaMAF--CIEM \\ Universidad Nacional de C\'ordoba.  X5000HUA --- C\'ordoba, Argentina}
\email{elauret@famaf.unc.edu.ar}
\subjclass[2010]{Primary 11F72; Secondary 11D45}
\keywords{exceptional eigenvalues, Selberg's eigenvalue conjecture, unit group of quadratic forms, lattice point theorem}
\thanks{Supported by CONICET and Secyt-UNC}
\date{December 15, 2012}
\begin{document}
\begin{abstract}
In a previous work we apply lattice point theorems on hyperbolic spaces obtaining asymptotic formulas for the number of integral representations of negative integers by quadratic and hermitian forms of signature $(n,1)$ lying in Euclidean balls of increasing radius.
This formula involves an error term that depends on the first nonzero eigenvalue of the Laplace-Beltrami operator on the corresponding congruence hyperbolic manifolds.
The aim of this paper is to compare the error term obtained by experimental computations, with the error term mentioned above, for several choices of quadratic and hermitian forms.

Our numerical results give evidences of the existence of exceptional eigenvalues for some arithmetic subgroups of $\SU(3,1)$, $\SU(4,1)$ and $\SU(5,1)$, thus they contradicts the generalized Selberg (and Ramanujan) conjecture in these cases.
Furthermore, for several arithmetic subgroups of $\SO(4,1)$, $\SO(6,1)$, $\SO(8,1)$ and $\SU(2,1)$, there are evidences of a lower bound on the first nonzero eigenvalue better than the already known lower bound for congruences subgroups.
\end{abstract}

\maketitle

\section{Introduction}

Let $\mathrm H_\F^n$ denote the $\F$-hyperbolic space (for $\F=\R$, $\C$ or $\mathbb H$) of dimension $n$.
Let $x,y\in\mathrm H_\F^n$ and let $\Gamma$ be a discrete subgroup of the isometry group of $\mathrm H_\F^n$ such that $\Gamma\ba \mathrm H_\F^n$ has finite volume.
Hyperbolic lattice point theorems give asymptotic formulas (as $r\to+\infty$) for
\[
N_{x,y}(r) := \#\{\gamma\in\Gamma: d(x,\gamma\cdot y)\leq r\}.
\]
This problem has been investigated by several authors (see for instance \cite{Patterson}, \cite{Lax-Phillips}, \cite{Levitan}, \cite{PhillipsRudnick}, \cite{Bruggeman-Miatello-Wallach}, \cite{Gorodnik-Nevo}), most of them concentrate on real hyperbolic spaces.
The main theme in these works links the asymptotic behavior of $N_{x,y}(r)$ with spectral data of the Laplace-Beltrami operator on $\Gamma\ba \mathrm H_\F^n$.

For example, if $\Gamma\ba \mathrm H_\F^n$ is not compact, Bruggeman, Miatello and Wallach~\cite{Bruggeman-Miatello-Wallach} showed that
\begin{equation}\label{eq1:LPTofBMW}
  N_{x,y}(r) =
    \frac{2^{1-n}\op{vol}(S^{rn-1})}{2\rho\op{vol}(\Gamma\ba\mathrm H_\R^n)} \, e^{2\rho\, r}
    + \sum_{j=1}^N B_j\; \varphi_j(x)\varphi_j(y) \, e^{\left(\rho+\nu_j\right)r}
    +O\left(e^{(2\rho\frac{n}{n+1}+\varepsilon)r}\right)
\end{equation}
for all $\varepsilon>0$, where
\begin{eqnarray*}
r=r_\F=\dim_\R(\F)&=& 1,2,4, \\
\rho=\rho_{n,\F}=\tfrac{n+1}{2}\, r-1 &=& \tfrac{n-1}2, n,2n+1
\end{eqnarray*}
for $\F=\R,\C,\mathbb H$ respectively, $\op{vol}(S^{m})$ denotes the volume of the $m$-dimensional unit sphere, $B_1,\dots, B_N$ are positive (explicit) coefficients, the numbers
\[
0<\lambda_1\leq\dots\leq\lambda_N< \rho^2
\]
are the \emph{exceptional eigenvalues} of the Laplace-Beltrami operator on $\Gamma\backslash\mathrm H_\F^n$ counted with multiplicities and, for each $1\leq j\leq N$,
\[
\nu_j= \sqrt{\rho^2-\lambda_j}
\]
and $\varphi_j$ is the normalized eigenfunction associated to $\lambda_j$.
We recall that $f(x)=O(g(x))$ if and only if there are $M,C>0$ such that $|f(x)|\leq C\,g(x)$ for all $x>M$; this is also denoted by $f(x) \ll g(x)$.
If $\F=\R$, \eqref{eq1:LPTofBMW} holds already with $\varepsilon=0$ by \cite{Levitan}.
In general, $\lambda_j$ and $\varphi_j$ are not determined for any $j$, hence it is customary to work with the abbreviated formula
\begin{equation}\label{eq1:LPTofBMW_tau}
  N_{x,y}(r) =
    \frac{2^{1-n}\op{vol}(S^{rn-1})}{2\rho\op{vol}(\Gamma\ba\mathrm H_\R^n)} \, e^{2\rho\, r}
    +O\left(e^{\tau r}\right),
\end{equation}
where
\begin{equation}\label{eq1:tau}
\tau=\tau(\Gamma):=\begin{cases}
\nu_1+\rho
  &\quad\text{if }\nu_1+\rho > 2\rho \frac{n}{n+1},\\
2\rho\frac{n}{n+1}+\varepsilon
  &\quad \text{otherwise}.
\end{cases}
\end{equation}

We can see that the exceptional eigenvalues play an important role in the distribution of the lattice points.
The existence or not of exceptional eigenvalues is an open problem.
By definition, $\lambda_1(\Gamma\ba\mathrm H_\F^n)\geq\rho_{n,\F}^2$ if and only if $\Gamma\ba\mathrm H_\F^n$ has not exceptional spectrum.
The \emph{Selberg eigenvalue conjecture} says that
\[
\lambda_1(\Gamma\ba \mathrm H_\R^2)\geq \rho_{2,\R}^2 = 1/4
\]
for all congruence subgroups $\Gamma$ in $\SL(2,\Z)$.
This is part of the \emph{Ramanujan-Petersson conjecture} at infinity.
Selberg~\cite{Selberg} established the bound $\lambda_1\geq 3/16$ by using the Kloosterman-Selberg zeta function and Weil estimates for Kloosterman sums.
The best lower bound available at the moment is $\lambda_1\geq 975/4096\approx 0.238\dots$ proved by Kim and Sarnak~\cite{Kim-Sarnak}.
Furthermore, the conjecture has been verified in some explicit cases: Roelcke~\cite{Roelcke} for $\Gamma=\SL(2,\Z)$ (see also \cite{Deshouillers-Iwaniec}), Huxley~\cite{Huxley} for $\Gamma_0(N)$ for all $N<19$, and more recently, Booker and Str\"ombergsson~\cite{Booker-Stroembergsson} for $\Gamma_0(N)$ for all $N<857$ squarefree.

It is also expected that this conjecture holds for $n=3$; in this case the conjecture says that $\lambda_1(\Gamma\ba \mathrm H_\R^3)\geq \rho_{3,\R}^2=1$ and Sarnak~\cite{Sarnak} proved that $\lambda_1(\Gamma\ba \mathrm H_\R^3)\geq 3/4$ for all congruence subgroups of $\SL(2,\mathcal O)$, where $\mathcal O$ is the ring of integers of an imaginary quadratic number field.
Some particular cases are verified in \cite{EGMsoviet}.

It is known that the generalization of Selberg's conjecture in higher dimensions is false.
When $\F=\R$ and $n$ is arbitrary, for $Q$ a quadratic form with rational coefficients of signature $(n,1)$, we say that a subgroup of
\begin{equation}\label{eq1:Gamma_Q^0}
\Gamma_Q^0:=\GL(n+1,\Z)\cap \SO^0(Q)
\end{equation}
(see Subsection~\ref{subsec:hyp_spaces} for arbitrary $\F$) is a congruence subgroup if it contains $\Gamma_Q^0(N):=\{g\in\Gamma_Q^0: g\equiv I_{n+1}\pmod N\}$ for some $N\in\N$ (see Definition~\ref{def2:cong-subg}  for arbitrary $\F$).
One can find in \cite[\S 6]{Cogdell-Li-Piatetski-Shapiro-Sarnak} an explicit construction via theta-lifting from $\SL(2)$ of exceptional spectrum of $\Gamma_Q^0(N)\ba\mathrm H_\R^n$ for large $N$.
However, Selberg and Sarnak's lower bound were generalized to
\begin{equation}\label{eq1:upper_tau_R}
\lambda_1(\Gamma\ba \mathrm H_\R^n)\geq \frac{2n-3}4
\end{equation}
for all $n\geq 3$ and for any congruence subgroup $\Gamma$.
This was proved independently by Elstrodt, Grunewald and Mennicke~\cite{EGMKloosterman} and Cogdell, Li, Piatetski-Shapiro and Sarnak~\cite{Cogdell-Li-Piatetski-Shapiro-Sarnak}.

In the unitary case the situation becomes even darker.
Jian-Shu Li~\cite{Li} (see Theorem~\ref{thm2:lambda_1_C}) proved that the first eigenvalue $\lambda_1$ for the Laplace-Beltrami operator on the subspace of nondegenerate forms (nontrivial Fourier coefficients) in $L^2(\Gamma\ba \mathrm H_\C^n)$ satisfies $\lambda_1\geq 2n-1$, where $\Gamma$ is any congruence subgroup of $\SU(n,1)$.

The aim of this paper is to check \eqref{eq1:LPTofBMW} numerically by assuming that there are no exceptional eigenvalues in $\Gamma_Q^0\ba\mathrm H_\F^n$ for several choices of $Q$.
Our computations seem to contradict this assumption ($\lambda_1\geq \rho^2$) for some hermitian forms $Q$ of signature $(n,1)$ with $n=3,4,5$.
In other words, we have numerical evidences to affirm that $\Gamma_Q^0\ba\mathrm H_\F^n$ has exceptional spectrum.

We also consider quadratic forms of signature $(n,1)$ for $n=2,4,6,8$ and hermitian forms of signature $(2,1)$.
In these cases, our numerical results do not seem to contradict the assumption $\lambda_1\geq \rho^2$.
Hence, our calculations give evidences of the weaker condition
\begin{equation}\label{eq1:weaker_cond}
\lambda_1(\Gamma_Q^0\ba \mathrm H_\F^n) > \frac{4n}{(n+1)^2}\; \rho_{n,\F}^2
\qquad \qquad
\left(\text{if and only if}\quad \rho+\nu_1\leq 2\rho\,\frac{n}{n+1}\right),
\end{equation}
since, by \eqref{eq1:tau}, this is the minimum lower bound that ensure the smallest error term in \eqref{eq1:LPTofBMW_tau}.
If \eqref{eq1:weaker_cond} holds (e.g.\ there are no exceptional eigenvalues), then we have that \eqref{eq1:LPTofBMW_tau} holds with
\begin{equation}\label{eq1:tau_sin_autov_excep}
\tau=2\rho\;\frac{n}{n+1}+\varepsilon=
\begin{cases}
  n(n-1)/(n+1)  &\quad\text{if $\F=\R$},\\
  2n^2/(n+1)+\varepsilon &\quad\text{if $\F=\C$,}\\
  (4n+2)n/(n+1)+\varepsilon& \quad\text{if $\F=\mathbb H$,}
\end{cases}
\end{equation}
for every $\varepsilon>0$ (recall that $\varepsilon=0$ when $\F=\R$).
To check \eqref{eq1:LPTofBMW} in a numerical way, we use a recent arithmetic application.

Hyperbolic lattice point theorems have been applied in different contexts, mostly on arithmetic problems (see \cite{EGMarith_applic}).
For instance, Ratcliffe and Tschantz~\cite{Ratcliffe-Tschantz}, by applying the results of \cite{Lax-Phillips} and \cite{Levitan}, obtained the asymptotic formula, as $r\to+\infty$, for the number of representations of a negative integer by the Lorentzian quadratic form that are contained in the ball of radius $r$ centered at the origin in $\R^{n+1}$.
In \cite{Lauret12}, we generalized \cite{Ratcliffe-Tschantz} to more general integral quadratic forms of signature $(n,1)$ and also to hermitian forms.
We next introduce some notation to state the main formula of \cite{Lauret12}.

For $\F=\R$ or $\C$, let $\mathcal O$ denote a maximal order in $\F$, that is, $\mathcal O=\Z$ in the real case and the ring of integers of an imaginary quadratic extension of the rational numbers if $\F=\C$.
We consider, for $n\geq2$, an integral $\F$-hermitian matrix ($B\in \M(m,\mathcal O)$ such that $B=B^*$)
\begin{equation}\label{eq1:Q}
Q\;=\;
\begin{pmatrix}
A&\\&-a
\end{pmatrix},
\end{equation}
with $a\in\N$ and $A\in\M(n,\mathcal O)$ a positive definite integral $\F$-hermitian matrix.
We also denote by $Q$ the induced $\F$-hermitian form $Q[x]:=x^*Qx=A[\hat x]-a\,|x_{n+1}|^2$ ($x\in\mathcal O^{n+1}$ and $\hat x:=(x_1,\dots,x_{n})^t$).
For $t>0$, put
\begin{equation}\label{eq1:N_t}
N_t(Q,-k)=\{x\in\mathcal O^{n+1}:Q[x]=k,\;|x_{n+1}|\leq t\}.
\end{equation}
In \cite{Lauret12}, by applying the formula \eqref{eq1:LPTofBMW} from \cite{Bruggeman-Miatello-Wallach} to the discrete subgroup $\Gamma_Q^0$ given by \eqref{eq1:Gamma_Q^0}, we establish the following formula.

\begin{theorem}\label{thm1:main_form}
If $-k$ is represented by $Q$, then
\begin{equation}\label{eq1:main-form}
    N_t(Q,-k)=
    C(Q,-k)\;
    t^{2\rho} + O(t^{\tau})
    \qquad \text{as } t\to+\infty,
\end{equation}
where
\begin{equation}\label{eq1:C(Q,-k)}
  C(Q,-k)=    \frac{2^{(r-1)(n+1)} \, a^\rho}{|d_{\mathcal O}|^{\frac{n+1}2} \, |\det Q|^{\frac{r}{2}}}\;
    \frac{\mathrm{vol}(S^{nr-1})}{2\rho}\;
    \frac{\pi^{r/2}}{\Gamma(\frac r2)}\;
    \delta(Q,-k).
\end{equation}
Here, $d_{\mathcal O}$ denotes the discriminant of the quotient field of $\mathcal O$, $\delta(Q,-k)=\prod_p \delta_p(Q,-k)$ is the local density of the representation $Q[x]=-k$ (see~\eqref{eq3:density}) and $\tau$ is as in \eqref{eq1:tau}.
\end{theorem}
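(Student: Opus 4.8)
The plan is to recast $N_t(Q,-k)$ as a hyperbolic lattice point count, apply the Bruggeman--Miatello--Wallach formula \eqref{eq1:LPTofBMW_tau}, and match the resulting leading coefficient with $C(Q,-k)$ by means of a mass formula. Realize $\mathrm H_\F^n$ via the hyperboloid $\{v\in\F^{n+1}:Q[v]=-1\}$, on which $\SO^0(Q)$ (resp.\ $\SU(Q)$) acts transitively and $\Gamma_Q^0$ acts with finite covolume; let $o\in\mathrm H_\F^n$ be the point represented by $(0,\dots,0,a^{-1/2})$. To a solution $x\in\mathcal O^{n+1}$ of $Q[x]=-k$ associate the point $\pi(x)\in\mathrm H_\F^n$ represented by $k^{-1/2}x$; the map $x\mapsto\pi(x)$ is $\Gamma_Q^0$-equivariant and finite-to-one with explicitly bounded fibres (coming from the sign $\pm1$ when $\F=\R$ and the units of $\mathcal O$ when $\F=\C$). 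A computation with the distance formula of $\mathrm H_\F^n$ shows that $d(o,\pi(x))$ depends monotonically on $|x_{n+1}|$, so that $|x_{n+1}|\le t$ is equivalent to $d(o,\pi(x))\le R(t)$ for an explicit increasing function $R$ with $e^{2\rho R(t)}=c_0\,t^{2\rho}+O(t^{2\rho-2})$ and $c_0>0$ explicit.

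By reduction theory (Borel--Harish-Chandra) the set of solutions of $Q[x]=-k$ is a finite union of $\Gamma_Q^0$-orbits; choose representatives $x^{(1)},\dots,x^{(h)}$, put $y_i=\pi(x^{(i)})$, and let $w_i$ be the order of the stabilizer of $y_i$ in $\Gamma_Q^0$ (finite, since $A$ is positive definite). Then the number of solutions in the $i$-th orbit with $|x_{n+1}|\le t$ is, up to the combinatorial factor $\kappa$ above, equal to $w_i^{-1}N_{o,y_i}(R(t))$, with $N_{x,y}(\cdot)$ as in the Introduction. Summing over $i$ and inserting \eqref{eq1:LPTofBMW_tau} --- the exceptional contributions, the error term of \eqref{eq1:LPTofBMW}, and the truncation error $O(t^{2\rho-2})$ are all $O(t^{\tau})$, since $e^{R(t)}\asymp t$ and $2\rho-2<\tau$ for $\F=\R,\C$ --- yields
\[
N_t(Q,-k)=\kappa\,\Big(\sum_{i=1}^{h} w_i^{-1}\Big)\,
\frac{2^{1-n}\,\op{vol}(S^{nr-1})}{2\rho\,\op{vol}(\Gamma_Q^0\ba\mathrm H_\F^n)}\;c_0\,t^{2\rho}+O(t^{\tau}).
\]

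It remains to check that the full coefficient of $t^{2\rho}$ equals $C(Q,-k)$ of \eqref{eq1:C(Q,-k)}. Inserting the explicit values of $\kappa$ and $c_0$ and cancelling the common factors $\op{vol}(S^{nr-1})/(2\rho)$ and $a^{\rho}$, this reduces to an identity of the form
\[
\frac{\sum_{i=1}^{h} w_i^{-1}}{\op{vol}(\Gamma_Q^0\ba\mathrm H_\F^n)}
=\frac{\pi^{r/2}\,k^{\rho}}{\Gamma(r/2)\,|d_{\mathcal O}|^{(n+1)/2}\,|\det Q|^{r/2}}\;\delta(Q,-k),
\]
the remaining powers of $2$ collapsing thanks to $1-n+2\rho=(n+1)(r-1)$. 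The left-hand side is a weighted count of representations of $-k$ associated with $Q$, and this identity is a version of Siegel's mass/weight formula (its Hermitian analogue when $\F=\C$), which equates such a weighted count with the product $\prod_p\delta_p(Q,-k)$ of local representation densities, combined with the classical evaluation of the covolume $\op{vol}(\Gamma_Q^0\ba\mathrm H_\F^n)$ in terms of $\det Q$ and $d_{\mathcal O}$. I expect this last step to be the main obstacle: reconciling the geometric normalization of the invariant measure used in \eqref{eq1:LPTofBMW} with the arithmetic normalization implicit in the $\delta_p$'s, that is, keeping track of all the Gamma-factors, discriminants, powers of $2$, and of the orbit-versus-genus discrepancy, uniformly for $\F=\R$ and $\F=\C$.
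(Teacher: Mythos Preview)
The paper does not actually prove this theorem; it merely quotes it from the author's earlier work \cite{Lauret12}, noting that it was obtained ``by applying the formula \eqref{eq1:LPTofBMW} from \cite{Bruggeman-Miatello-Wallach} to the discrete subgroup $\Gamma_Q^0$.'' Your sketch is therefore being compared against an external reference rather than an in-paper argument, and the strategy you outline --- rewrite $N_t(Q,-k)$ as a finite sum of orbital lattice-point counts $N_{o,y_i}(R(t))$, feed each into \eqref{eq1:LPTofBMW_tau}, and then identify the leading constant via a Siegel-type mass formula --- is precisely the shape of the argument in \cite{Lauret12}, which in turn follows Ratcliffe--Tschantz \cite{Ratcliffe-Tschantz}. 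Your auxiliary checks ($2\rho-2<\tau$ and $1-n+2\rho=(n+1)(r-1)$) are correct in both cases $\F=\R,\C$.

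The one place where your sketch is thinner than it should be is exactly the one you flag yourself: the identification of $\sum_i w_i^{-1}/\op{vol}(\Gamma_Q^0\ba\mathrm H_\F^n)$ with the product of local densities. Siegel's mass formula in its classical form averages over the \emph{genus} of $Q$, not over $\Gamma_Q^0$-orbits of representations by a single $Q$; what is actually used in \cite{Lauret12} (following \cite{Ratcliffe-Tschantz}) is Siegel's theorem for \emph{indefinite} forms, in which the genus of an isotropic indefinite form of rank $\geq 3$ consists of a single class and the weighted orbit count genuinely equals $\delta(Q,-k)$ times an explicit volume factor. You should make that point explicit rather than invoking ``a version of Siegel's mass/weight formula'' generically, since for definite forms or anisotropic indefinite forms the reduction you wrote would not go through as stated. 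With that caveat, your sketch is sound and matches the intended proof.
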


Now we can explain our strategy in more detail.
We set
\begin{equation}\label{eq1:Psi}
\Psi(t)=\Psi_t{(Q,-k)}=\left|\frac{N_t(Q,-k)}{t^{2\rho}}-\coef\right|,
\end{equation}
thus $\Psi(t)\ll t^{\tau-2\rho}$ and $\lim_{t\to\infty} \Psi(t)=0$ since $\tau<2\rho$.
With the help of a computer, we quantify $\Psi(t)$ for large $t$, estimating its decay as $\Psi(t) \sim B t^\sigma$ (i.e.\ $\lim_{t\to\infty} \Psi(t)/t^\sigma=B$) for some $\sigma<0$ and $B>0$.
Next, we compare the obtained error term $O(t^{\tau-2\rho})$ of $\Psi(t)$ in \eqref{eq1:main-form} with our estimate $O(t^\sigma)$.
As we mentioned before, we are mostly interested in comparing them by assuming that $\lambda_1(\Gamma_Q^0\ba\mathrm H_\F^n)\geq \rho_{n,\F}$ (i.e.\ there are no exceptional eigenvalues in $\Gamma_Q^n\ba \mathrm H_\F^n$), or more generally by assuming the weaker condition \eqref{eq1:weaker_cond}; in both cases, $\tau$ is given by \eqref{eq1:tau_sin_autov_excep}.

We state the conjectures evidenced from the numerical results.
Let $I_n$ denote the $n\times n$-identity matrix.

\begin{conjecture}\label{conj1:existence}
Let $Q$ be the $\C$-hermitian form $\diag(I_n,-a)$ over $\Z[\sqrt{-3}]$ for $n=3,4,5$, where $a\in\Z$ and $1\leq a\leq 15$.
Let $\Gamma_Q^0\subset\SU(Q,\C)\cong\SU(n,1)$ be the associated discrete subgroup given by \eqref{eq1:Gamma_Q^0} and let $\lambda_1$ be the first nonzero eigenvalue of the Laplace-Beltrami operator on $\Gamma_Q^0\ba\mathrm H_\C^n$.
Then $\Gamma_Q^0\ba\mathrm H_\C^n$ has exceptional spectrum (i.e.\ $\lambda_1<\rho_{n,\C}^2=n^2$).
Moreover,
\[
\lambda_1 < \frac{4n^3}{(n+1)^2}.
\]
\end{conjecture}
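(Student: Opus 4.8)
The plan is to provide numerical evidence for the sharper inequality $\lambda_1<\tfrac{4n^3}{(n+1)^2}$; the first assertion $\lambda_1<n^2$ then follows at once, because $\tfrac{4n^3}{(n+1)^2}<n^2$ for every $n\ge 2$ (this is $(n-1)^2>0$). For each admissible pair $(n,a)$ one fixes a negative integer $-k$ represented by $Q=\diag(I_n,-a)$ over $\Z[\sqrt{-3}]$ --- for instance $-k=-a$, represented by the last basis vector --- enumerates the set $N_t(Q,-k)$ of \eqref{eq1:N_t} along an increasing sequence of radii $t$, and forms $\Psi(t)=\Psi_t(Q,-k)$ as in \eqref{eq1:Psi}. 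By Theorem~\ref{thm1:main_form} we have $\Psi(t)\ll t^{\tau-2\rho}$, and by \eqref{eq1:LPTofBMW} the genuine size of $\Psi(t)$ is that of the oscillatory sum $\bigl|\sum_{j=1}^{N}B_j\,\varphi_j(x)\varphi_j(y)\,t^{\nu_j-\rho}\bigr|$ together with a provable tail of order $t^{-2n/(n+1)+\varepsilon}$. If $\Gamma_Q^0\ba\mathrm H_\C^n$ carried no exceptional spectrum, \eqref{eq1:tau_sin_autov_excep} would force the decay exponent of $\Psi$ to be at most $-\tfrac{2n}{n+1}+\varepsilon$; conversely, an empirically observed exponent $\sigma$ that is robustly larger than $-\tfrac{2n}{n+1}$ both rules out the absence of exceptional eigenvalues and, through \eqref{eq1:tau}, identifies an eigenvalue with $\rho+\nu_1>2\rho\frac{n}{n+1}$, which is exactly the condition $\lambda_1<\tfrac{4n^3}{(n+1)^2}$.

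The computation divides into two independent tasks. First, the leading coefficient $\coef$ must be evaluated to high precision: by \eqref{eq1:C(Q,-k)} this reduces to the singular series $\delta(Q,-k)=\prod_p\delta_p(Q,-k)$, which for a diagonal hermitian form $\diag(I_n,-a)$ over the Eisenstein integers has explicit local factors --- trivial for primes $p$ not dividing $2a\det Q$, and with only $p=3$ ramified in $\Z[\sqrt{-3}]$ --- multiplied by the elementary sphere-volume and Gamma constants appearing in \eqref{eq1:C(Q,-k)}. Second, $N_t(Q,-k)$ must be produced by exhaustive enumeration of $x\in\mathcal O^{n+1}$ with $Q[x]=k$ and $|x_{n+1}|\le t$; the size of this set grows like $t^{2n}$, so one pushes $t$ as far as computing resources allow and tabulates $\Psi(t)$. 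One then estimates $\sigma$ by a least-squares fit of $\log\Psi(t)$ against $\log t$ over the longest available window and reads off $\nu_1\approx\rho+\sigma$ and $\lambda_1\approx\rho^2-(\rho+\sigma)^2=n^2-(n+\sigma)^2$.

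The hard part is separating the leading exceptional term from everything else contributing to $\Psi(t)$: there may be several exceptional eigenvalues $\nu_1\ge\nu_2\ge\cdots$ with comparable exponents, the coefficients $\varphi_j(x)\varphi_j(y)$ may change sign and create marked oscillations, and a given exceptional eigenfunction can vanish at the basepoint determined by the chosen representation of $-k$, making its eigenvalue invisible for that $-k$ although detectable for another. A log-log fit taken over too short a range, or near a zero of the dominant coefficient, could therefore overshoot, undershoot, or entirely miss $\nu_1$. Overcoming this obstacle requires checking that the fitted exponent $\sigma$ is stable across several representations $-k$ of distinct negative integers, across disjoint subintervals of $t$, and under changes of normalization, and then arguing that the persistent empirical rate exceeds the provable background exponent $-\tfrac{2n}{n+1}$ by a margin that no subleading or oscillatory correction can account for. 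It is this robustness analysis, rather than any one closed-form computation, that underpins the evidence for Conjecture~\ref{conj1:existence}.
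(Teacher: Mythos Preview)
Your strategy coincides with the paper's: compute $\coef$ exactly from the local densities, tabulate $N_t(Q,-k)$ and hence $\Psi(t)$ up to a cutoff $T$, fit a power law $\Psi(t)\sim Bt^{\sigma}$, and test whether $\sigma$ robustly exceeds the threshold $\Omega_{n,\C}=-2n/(n+1)+\varepsilon$, which by \eqref{eq1:tau} is equivalent to $\lambda_1<4n^3/(n+1)^2$. The paper carries this out with $k=1$ throughout (not $k=a$), reaches $T=10^{4}$, and fits the power law only on the upper-envelope set $\mathcal E=\{(m,\Psi(m)):\Psi(m)>\Psi(l)\text{ for all }l>m\}$ rather than on all data, which is its device for suppressing the oscillations you worry about in your final paragraph. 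The robustness checks you propose (varying $k$, restricting to subwindows, changing basepoints) are reasonable strengthenings that the paper does not actually perform.

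The one substantive gap is your phrase ``exhaustive enumeration of $x\in\mathcal O^{n+1}$''. For $n=5$ and $T=10^{4}$ the count $N_T(Q,-k)$ is of order $T^{2n}=10^{40}$, so any method that touches the solutions individually is hopeless, and you would not reach a range of $t$ large enough to separate $\sigma$ from $\Omega_{n,\C}$. The paper avoids this entirely via the decomposition
\[
N_t(Q,-k)=\sum_{m\le t^{2}} F_{I_n,\mathcal O}(am-k)\,G_{\mathcal O}(m)
\]
of \eqref{eq4:N_t=sumF_A}, where $F_{I_n,\mathcal O}(\ell)$ counts representations of $\ell$ as a sum of $n$ norms from $\mathcal O$ and $G_{\mathcal O}(m)=\#\{z\in\mathcal O:|z|^{2}=m\}$. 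For $A=I_n$ over the Eisenstein integers both $F$ and $G$ are theta coefficients with known closed divisor-sum formulas (the paper cites \cite{Otremba}, \cite{Williams08}, \cite{Williams10}), and it is these formulas, not enumeration, that make $T=10^{4}$ attainable and the evidence for the conjecture meaningful.
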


\begin{conjecture}\label{conj1:lower_bound}
Let $Q$ be the $\F$-hermitian form $\diag(I_n,-a)$ for $n=4,6,8$ if $\F=\R$ and, over $\Z[\sqrt{-3}]$ for $n=2$ if $\F=\C$, where $a\in\Z$ and $1\leq a\leq 15$.
Let $\Gamma_Q^0$ and $\lambda_1$ as above.
Then
\[
\lambda_1 \geq \frac{4n}{(n+1)^2}\;\rho_{n,\F}^2,
\]
where $\rho_{n,\F}=\frac{n-1}2,n$ for $\F=\R,\C$ respectively.
\end{conjecture}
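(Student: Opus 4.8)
The plan is not to establish Conjecture~\ref{conj1:lower_bound} as a theorem but to assemble numerical evidence for it, using Theorem~\ref{thm1:main_form} as the bridge between the arithmetic count $N_t(Q,-k)$ and the spectral gap $\lambda_1$ of $\Gamma_Q^0\ba\mathrm H_\F^n$. Recall from \eqref{eq1:tau} and \eqref{eq1:tau_sin_autov_excep} that $\tau=\tau(\Gamma_Q^0)$ attains its minimal value $2\rho\frac{n}{n+1}+\varepsilon$ precisely when \eqref{eq1:weaker_cond} holds, i.e.\ when $\rho+\nu_1\le 2\rho\frac{n}{n+1}$, equivalently $\lambda_1\ge\frac{4n}{(n+1)^2}\rho_{n,\F}^2$; otherwise $\tau=\rho+\nu_1>2\rho\frac{n}{n+1}$. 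Hence, from \eqref{eq1:main-form} and the definition \eqref{eq1:Psi} of $\Psi(t)$, one has $\Psi(t)\ll t^{\tau-2\rho}$ with
\[
\tau-2\rho \;=\;
\begin{cases}
-\,\dfrac{2\rho_{n,\F}}{n+1}+\varepsilon, & \text{if \eqref{eq1:weaker_cond} holds},\\[3mm]
\nu_1-\rho_{n,\F}\in\Bigl(-\tfrac{2\rho_{n,\F}}{n+1},\,0\Bigr), & \text{otherwise};
\end{cases}
\]
moreover, when \eqref{eq1:weaker_cond} fails the offending term $B_1\varphi_1(x)\varphi_1(y)e^{(\rho+\nu_1)r}$ of \eqref{eq1:LPTofBMW} generically dominates, so there one actually expects $\Psi(t)\asymp t^{\nu_1-\rho}$. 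Thus measuring the genuine decay rate of $\Psi(t)$ and checking it against $t^{-2\rho_{n,\F}/(n+1)}$ separates the two situations, and observing the faster (optimal) rate for $Q=\diag(I_n,-a)$ in the stated ranges is exactly the evidence sought.

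First I would, for each prescribed $\F$, $n$, $a$ and for a small $-k$ actually represented by $Q$, enumerate the finite sets $N_t(Q,-k)$ of \eqref{eq1:N_t} along a long increasing sequence of radii $t$: explicitly, list all $x\in\mathcal O^{n+1}$ (with $\mathcal O=\Z$ when $\F=\R$ and $\mathcal O=\Z[\sqrt{-3}]$ when $\F=\C$) satisfying $Q[x]=-k$, i.e.\ $|x_1|^2+\dots+|x_n|^2=a\,|x_{n+1}|^2-k$, and $|x_{n+1}|\le t$, which is a loop over $x_{n+1}$ followed by a shell enumeration for the positive definite form $I_n$. In parallel I would evaluate $\coef$ from \eqref{eq1:C(Q,-k)}: the archimedean factors are closed form, and the local density $\delta(Q,-k)=\prod_p\delta_p(Q,-k)$ is a finite product whose factors are either $1$ (for $p\nmid 2a\,\det Q\,d_{\mathcal O}$) or computable by counting solutions of $Q[x]\equiv -k\pmod{p^{\,m}}$ for $m$ large enough that the normalized count stabilizes, equivalently by the classical recursion for $p$-adic densities. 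With both pieces in hand I would form $\Psi(t)$ and estimate the exponent $\sigma$ in $\Psi(t)\sim B\,t^{\sigma}$ by a least-squares regression of $\log\Psi(t)$ against $\log t$ over the widest window of $t$ I can afford, discarding small-$t$ transients.

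The concluding step is the comparison: if, across $1\le a\le 15$ and for more than one choice of $k$, the fitted $\sigma$ is stably compatible with $-2\rho_{n,\F}/(n+1)$ rather than with some strictly larger value $\nu_1-\rho_{n,\F}$, then \eqref{eq1:weaker_cond} is supported --- which is precisely Conjecture~\ref{conj1:lower_bound}. It is worth stressing that for each of the $n$ considered here the conjectured bound $\frac{4n}{(n+1)^2}\rho_{n,\F}^2$ is strictly larger than the best proved lower bound for congruence subgroups --- $\frac{2n-3}{4}$ from \eqref{eq1:upper_tau_R} when $\F=\R$, and Jian-Shu Li's bound $\lambda_1\ge 2n-1$ when $\F=\C$ --- so the numerics probe something genuinely finer than the current theorems.

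The main obstacle will be that $\Psi(t)$ is a vanishingly small fraction of the main term: it has size $\asymp t^{-2\rho/(n+1)}$ against a main term of size $\asymp t^{2\rho}$, so the cancellation $N_t(Q,-k)/t^{2\rho}-\coef$ must be carried out with exact integer arithmetic for $N_t$ and high precision for $\coef$, while --- more seriously --- the number of lattice points to be enumerated already grows like $t^{2\rho}$, which sharply caps the reachable $t$ (worst, among the cases of this conjecture, for $\F=\R$, $n=8$, where $2\rho=7$). Compounding this, $\Psi(t)$ almost certainly carries oscillatory contributions from the secondary spectral terms and from the arithmetic fluctuation of $N_t$, so a clean power law will emerge only on an envelope and only over a sufficiently wide logarithmic range; cleanly separating $\sigma=-2\rho/(n+1)$ from a slightly larger exponent is therefore the delicate point, and the honest output of such a computation is a conjecture rather than a proof.
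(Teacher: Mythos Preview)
Your overall strategy coincides with the paper's: form $\Psi(t)$ from $N_t(Q,-k)$ and $C(Q,-k)$, fit $\Psi(t)\sim Bt^{\sigma}$ by least squares on a log--log scale, and test $\sigma\le\Omega:=-2\rho_{n,\F}/(n+1)$, which is exactly \eqref{eq1:weaker_cond}. Two points, however, need to be fixed.

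The serious one is your computation of $C(Q,-k)$. You assert that $\delta(Q,-k)=\prod_p\delta_p(Q,-k)$ is a \emph{finite} product, with $\delta_p=1$ once $p\nmid 2a\det Q\,d_{\mathcal O}$. This is false: for such primes one has (real case, $m=n+1$ variables) $\delta_p(Q,-k)=1\pm\legendre{D}{p}p^{-\lfloor m/2\rfloor}$ by Lemma~\ref{lem3:delta_pR}, and the infinite tail over good primes is a genuine Euler product equal to $1/L(m/2,D)$ or $L((m-1)/2,D)/\zeta(m-1)$ according to parity (Theorems~\ref{thm3:real-density} and~\ref{thm3:complex-density}). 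Dropping this tail gives a wrong constant --- for $Q=I_{4,1}$, $k=1$ the paper gets $\delta=15/(2\pi^{2})$ and $C=5$, whereas your recipe yields $\delta_2=5/8$ alone and $C=5\pi^2/12\approx4.11$ --- and then $\Psi(t)$ does not tend to $0$ at all, so no decay exponent can be extracted. You must identify the tail with the appropriate $L$- or $\zeta$-value.

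The second point is practical but decisive for the ranges in the conjecture. Your brute-force shell enumeration is correct in principle but, as you note, costs $\asymp t^{2\rho}$ and will not reach the paper's $T=10^6$ in the real cases. The paper instead writes $N_t(Q,-k)=\sum_{m\le t^2}F_{A,\mathcal O}(am-k)\,G_{\mathcal O}(m)$ (equation~\eqref{eq4:N_t=sumF_A}) and evaluates $F_{I_n,\Z}(m)$ via known \emph{closed-form} divisor-sum formulas (Jacobi's four-square formula and its analogues from \cite{Williams08,Williams10,Otremba}), reducing the work to essentially linear in $T$. It also makes your ``envelope'' idea precise by fitting only the running-maximum set $\mathcal E=\{(m,\Psi(m)):\Psi(m)>\Psi(l)\text{ for all }l>m\}$, which suppresses the oscillations you anticipate and stabilizes the estimate of $\sigma$.
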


We have made similar computations for other quadratic forms of signature $(4,1)$ and $(6,1)$.
They are listed at the end of the paper.
Conjecture~\ref{conj1:lower_bound} also holds for these cases.

This paper is organized as follows.
In Section~\ref{sec:exceptional}, since $\Gamma_Q^0$ is trivially a congruence subgroup (see Definition~\ref{def2:cong-subg}), we obtain upper bounds for $\tau$ by applying known lower bounds for $\lambda_1$ for general congruence subgroups.
Sections~\ref{sec:local_densities} is devoted to compute, for arbitrary diagonal nondegenerate $\F$-hermitian forms $Q$ and any $k\in\Z$, the local density $\delta(Q,k)=\prod_p\delta_p(Q,k)$.
In Section~\ref{sec:approx} we compute the main coefficient $C(Q,-k)$ and also we give an approximation for the number $\sigma$.
In Section~\ref{sec:conclusion} we present tables with the numerical results and their respective conclusions.
All the necessary computations have been done in \cite{Sage}.

Let us conclude this introduction by looking at two illustrative examples.

\begin{figure}
\caption{Graphic of $\Psi_t(Q,-1)$ when $\F=\R$, $Q=I_{4,1}$ and $k=1$.}
\label{fig:Q=(I_4,-1)}
\includegraphics[width=0.95\textwidth]{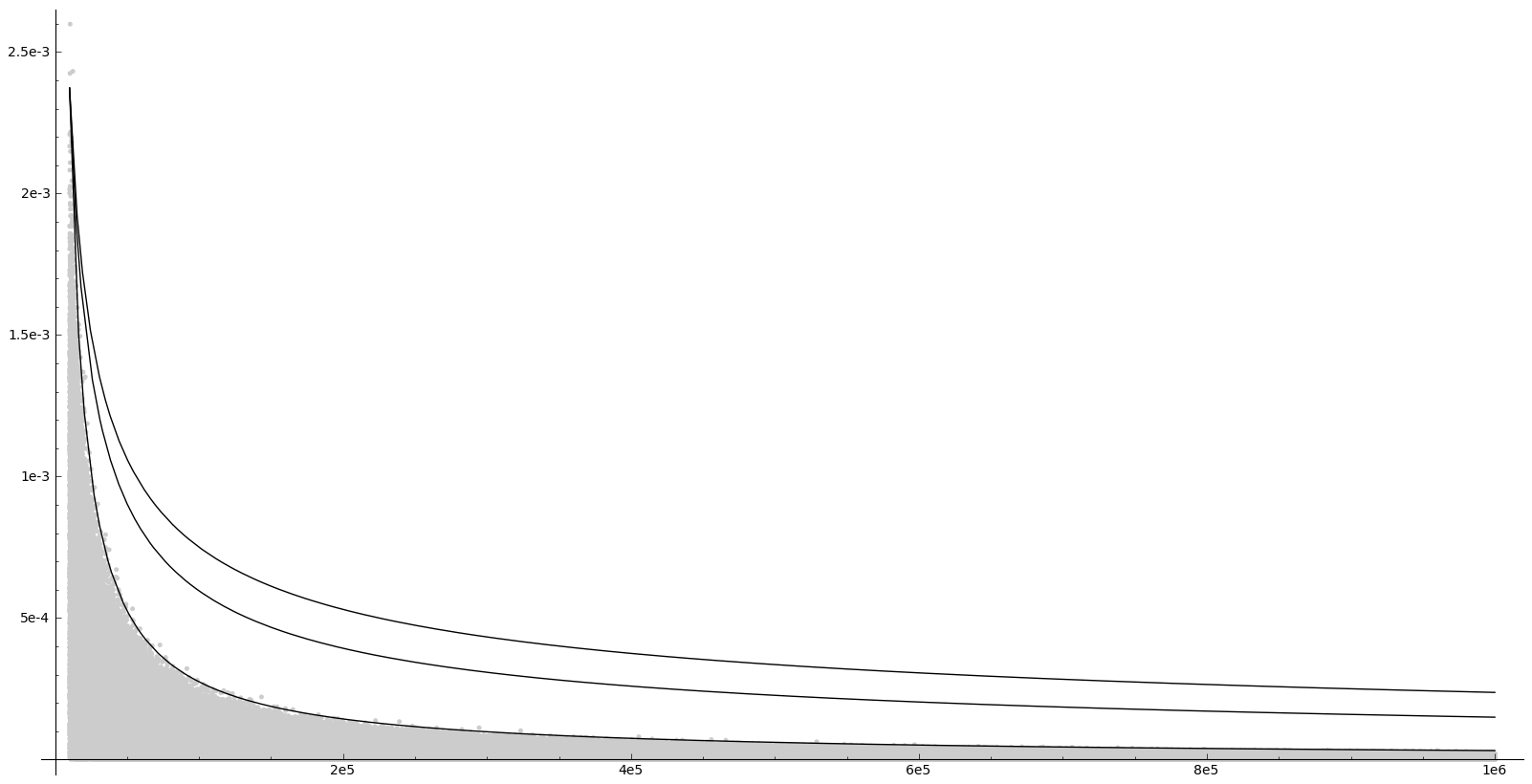}
\includegraphics[width=0.95\textwidth]{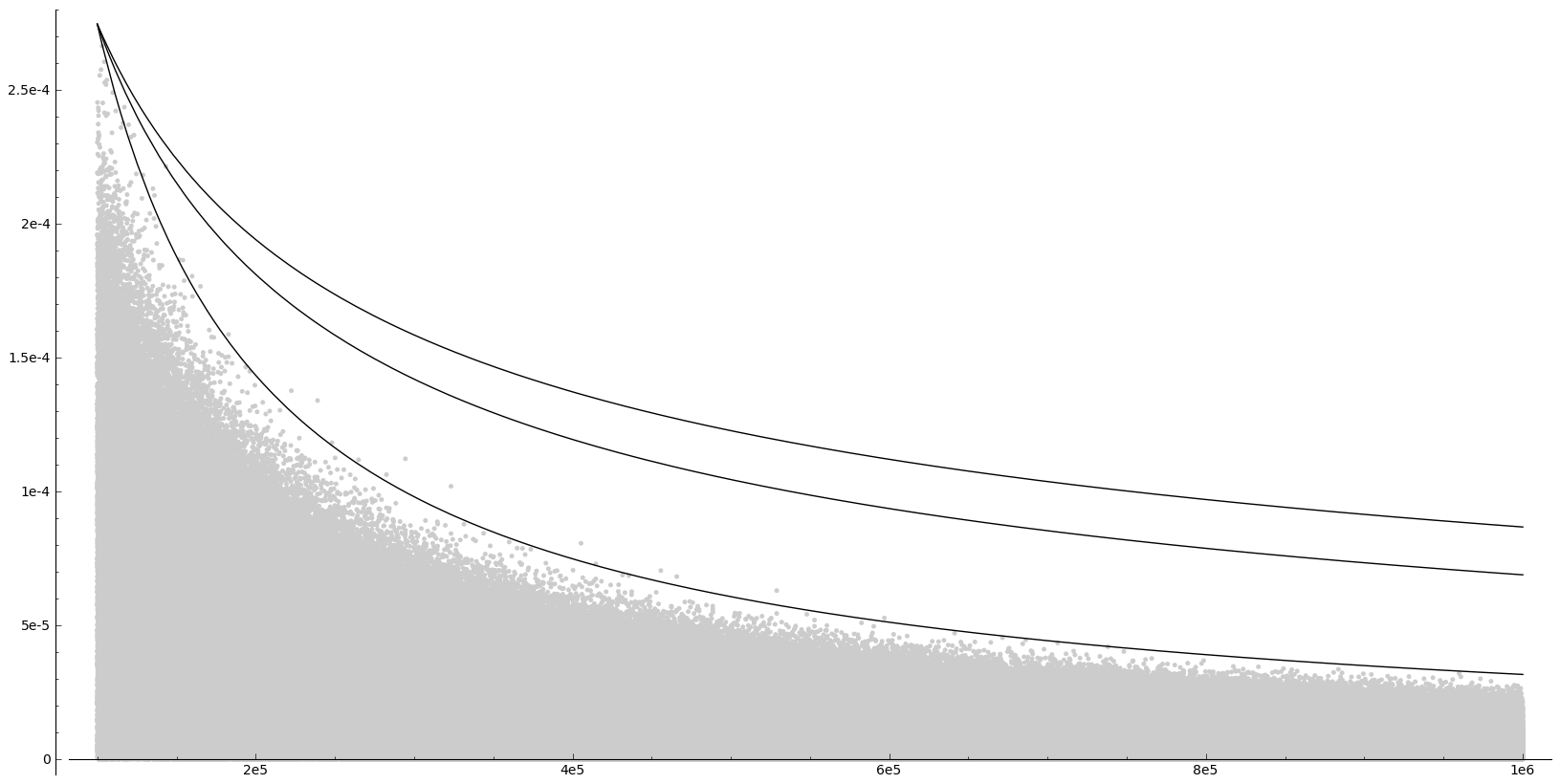}\\
The graphics include the points $(m,\Psi(m))$ for $m\in [\![10^4,10^6]\!]$ above and for $m\in [\![10^5,10^6]\!]$ below, curves of decreasing order $t^{-0.93}$, $t^{-3/5}$ and $t^{-1/2}$.
The curves are chosen so that they have the same value at the endpoint on the left.
\end{figure}

\begin{example}\label{ex1:Q=I_41}
Consider the $\R$-hermitian form, or `quadratic form', $Q=I_{4,1}=\left(\begin{smallmatrix}I_4&\\&-1  \end{smallmatrix}\right)$, where $I_4$ denotes the $4\times4$ identity matrix.
Here we have $C(Q,-1)=5$ (see Subsection~\ref{subsec:calc_C(Q,-k)}).
Furthermore, \eqref{eq1:upper_tau_R} says that $\lambda_1\geq 5/4$, thus $\tau=\nu_1+\rho_{4,\R} \leq \sqrt{(3/2)^2-5/4}+3/2= 5/2$ from \eqref{eq1:tau} (see also Corollary~\ref{cor2:cota_tau_R}).
Hence, we have
\begin{equation*}
    N_t(I_{4,1},-1)= 5\, t^{3} + O(t^{2.5})
    \qquad\text{as }t\to+\infty.
\end{equation*}
On the other hand, if there do not exist exceptional eigenvalues of the Laplace-Beltrami operator on $\Gamma_{Q}^0\ba \mathrm H_\R^4$ (or more generally \eqref{eq1:weaker_cond} holds), \eqref{eq1:tau_sin_autov_excep} ensures that
\begin{equation*}
    N_t(I_{4,1},-1)= 5\, t^{3} + O(t^{2.4})
    \qquad\text{as }t\to+\infty.
\end{equation*}

The formulas given above mean that $\Psi(t)\ll t^{-0.5}$ and $\Psi(t)\ll t^{-0.6}$ respectively.
On the other hand, our approximation of $\sigma$ is $-0.93$, more precisely, $\Psi(t)\sim 13\,t^{-0.93}$.
Figure~\ref{fig:Q=(I_4,-1)} shows the points $(m,\Psi(m))$ up to $10^6$ and the three different curves mentioned above.

We can see that the approximating curve is under the curve obtained by assuming $\lambda_1\geq\rho_{4,\R}^2=9/4$, thus these computations do not contradict the assumption of the nonexistence of exceptional eigenvalues in $\Gamma_Q^0\ba\mathrm H_\R^4$.
On the other hand, this numerical test gives an evidence for the lower bound $\lambda_1(\Gamma_Q^0\backslash \mathrm H_\R^4)\geq\rho_{4,\R}^2 \,4n/(n+1)^2=36/25=1.44$.
\end{example}

\begin{figure}
\caption{Graphic of $\Psi_t(Q,-1)$ when $\F=\C$, $\mathcal O=\Z[\sqrt{-3}]$, $H=I_{5,1}$ and $k=1$}
\label{fig:H=(I_5,-1)}
\includegraphics[width=0.95\textwidth]{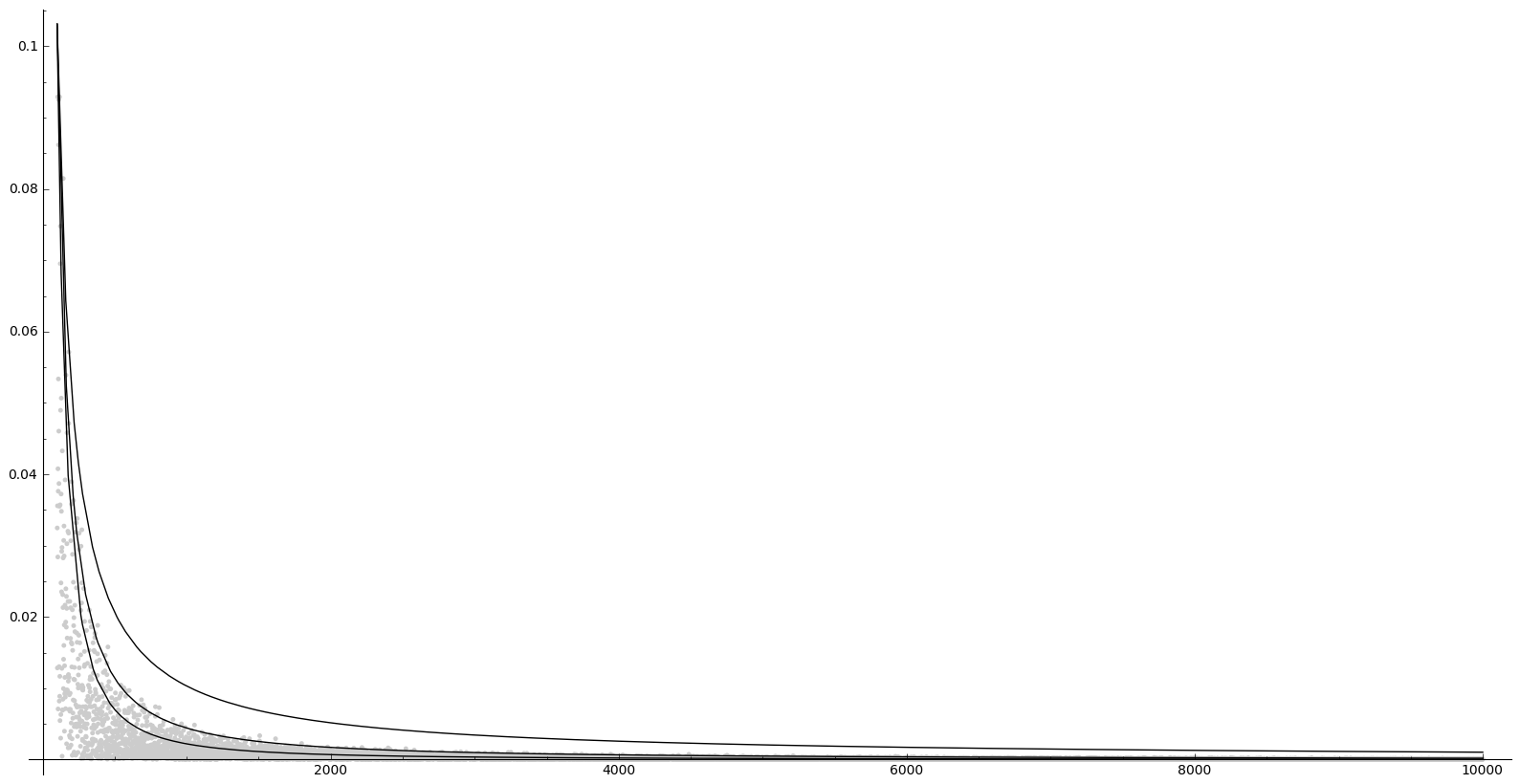}
\includegraphics[width=0.95\textwidth]{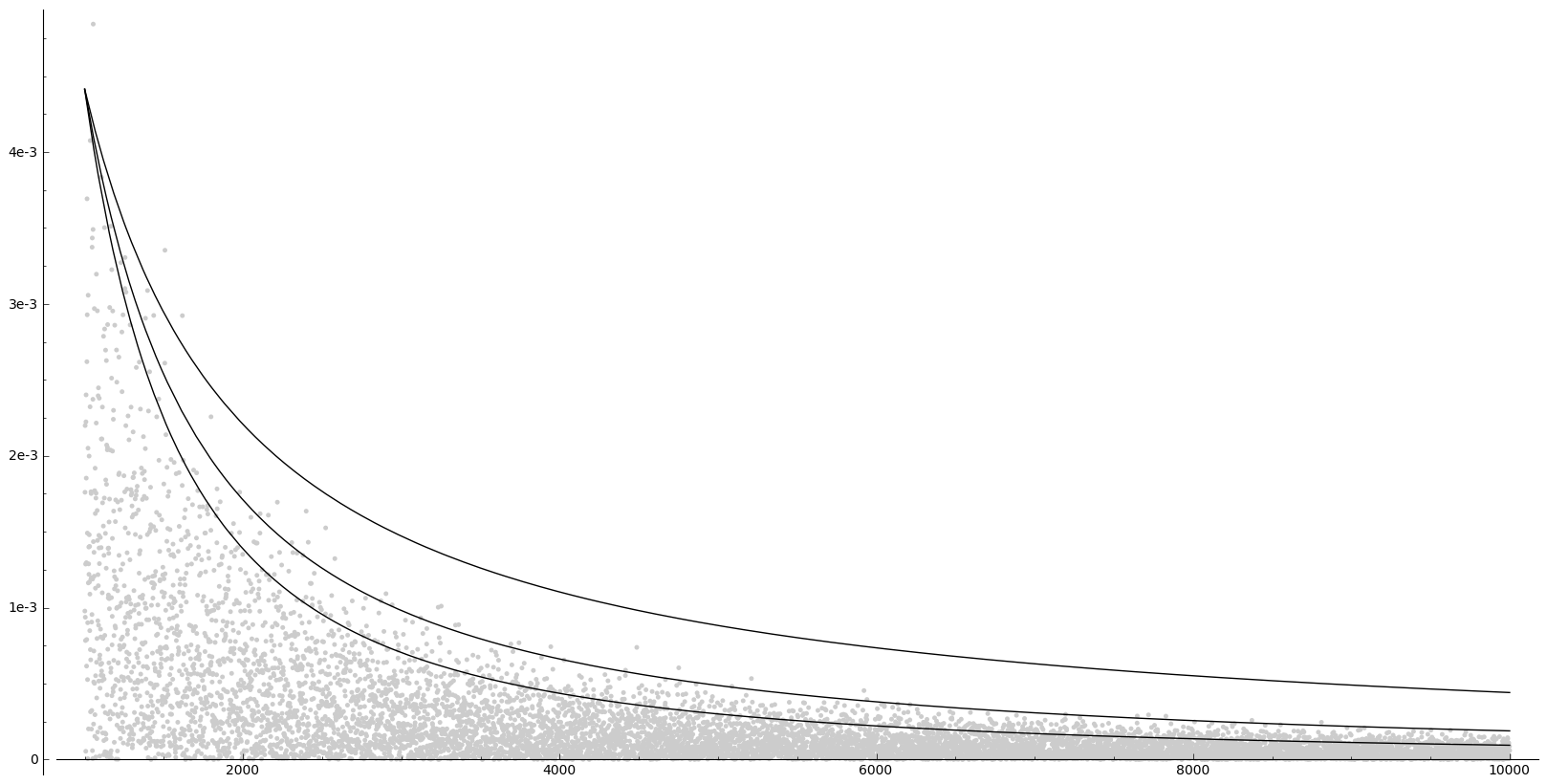}\\
The graphics include the points $(m,\Psi(m))$ for $m\in [\![10^2,10^4]\!]$ above and for $m\in [\![10^3,10^4]\!]$ below, curves of decreasing order $t^{-1.67}$, $t^{-1.36}$ and $t^{-1}$.
The curves are chosen so that they have the same value at the endpoint on the left.
\end{figure}

\begin{example}\label{ex1:H=I_51}
We now consider, for $\F=\C$ and $\mathcal O=\Z[\sqrt{-3}]$, the hermitian form $H=I_{5,1}=\left(\begin{smallmatrix}I_5&\\&-1  \end{smallmatrix}\right)$.
We have $C(H,-1)=18$ (see Subsection~\ref{subsec:calc_C(Q,-k)}), hence
\begin{equation*}
    N_t(H,-1)= 18\, t^{10} + O(t^{\tau})
    \qquad\text{as }t\to+\infty,
\end{equation*}
where $\tau$ is given by \eqref{eq1:tau}.

In this case, if we assume that there are no exceptional eigenvalues, or more generally \eqref{eq1:weaker_cond}, then the above formula holds with $\tau=50/6+\varepsilon$ ($50/6\approx8.33$) for each $\varepsilon>0$, hence $\Psi(t)\ll t^{-1.67}$.
However, this assumption contradicts our approximation $\Psi(t)\sim 56\, t^{-1.36}$, thus this is numerical evidence on the existence of exceptional spectrum in $\Gamma_{H}^0\ba \mathrm H_\C^5$.
Moreover, the estimation $\tau-2\rho\approx\sigma=-1.36$ tell us that $\lambda_1\approx 11.82$ since $\tau=\sqrt{5^2-\lambda_1}+5$ and $2\rho=10$.

Figure~\ref{fig:H=(I_5,-1)} shows the points $(m,\Psi(m))$ up to $10^4$ and the three different curves mentioned above.

In this case, Jian-Shu Li's lower bound \cite{Li} (see Theorem~\ref{thm2:lambda_1_C}) implies that $\lambda_1\geq 9$, where $\lambda_1$ denotes the first nonzero eigenvalue for the Laplace-Beltrami operator on the subspace of nondegenerate forms (nontrivial Fourier coefficients) in $L^2(\Gamma_H^0\ba \mathrm H_\C^n)$.
If we assume that this lower bound holds on all $L^2(\Gamma_H^0\ba \mathrm H_\C^5)$, we obtain that $\tau\leq 9$ (see Remark~\ref{rem2:cota_tau_C}), hence $\Psi(t)\in O(t^{-1})$.
Therefore, our estimation $\Psi(t)\sim 56\, t^{-1.36}$ give evidence on the extended Li's lower bound for $\Gamma_H^0$.
\end{example}

\section{Upper bounds for $\tau$}\label{sec:exceptional}

\subsection{Hyperbolic spaces}\label{subsec:hyp_spaces}

In this subsection we may assume that $\F=\R$, $\C$ or $\mathbb H$.
It contains a brief review of Sections 2 and 3 in \cite{Lauret12}.
We consider $\F^{n+1}$ as a right module over $\F$.
Let $Q$ be an $\F$-hermitian matrix as in \eqref{eq1:Q}.
This matrix induces the $\F$-sesquilinear form $Q(x,y):=x^*Qy$ and the $\F$-hermitian form $Q[x]:=Q(x,x)=x^*Qx$.

We consider the $n$-dimensional $\F$-hyperbolic space realized by the \emph{$Q$-Kleinian model}
\begin{equation}
  \mathrm H_\F^n(Q) = \left\{ [x]\in\mathrm{P}\F^{n}: Q[x]<0 \right\}.
\end{equation}
Under a standard Riemannian structure described in \cite[\S2]{Lauret12}, the distance between two points $[x]$ and $[y]$ in $\mathrm H_\F^n(Q)$ is given by
\[
\cosh(d([x],[y])) = \frac{|Q(x,y)|}{|Q[x]|^{1/2}\,|Q[y]|^{1/2}},
\]
Let $\U(Q,\F)$ be the \emph{$Q$-unitary group} and let $\SU(Q,\F)$ be the \emph{special $Q$-unitary group}, that is
\begin{align*}
  \U(Q,\F) =& \, \{g\in\GL(n+1,\F):Q[g]=g^*Qg=Q\},\\
  \SU(Q,\F)=& \, \{g\in\U(Q,\F):\det(g)=1\}.
\end{align*}
The action of these groups on $\mathrm H_\F^n(Q)$ is given by $g\cdot [x]=[gx]$ for $g\in\U(Q,\F)$.
This action is by isometries, transitive and the only elements that induces the identity map are those in the center $Z(\U(Q,\F))$ of $\U(Q,\F)$.

Let $G=\SU^0(Q,\F)$ be the identity component of $\SU(Q,\F)$.
When $\F=\C$ or $\mathbb H$, the group $\SU(Q,\F)$ is already connected, thus $G=\SU(Q,\F)$.
However, if $\F=\R$ we have
\begin{equation*}
G=\PSO(Q):=\{g\in\U(Q,\R):g_{n+1,n+1}>0\}.
\end{equation*}
In all cases, $\mathrm H_\F^n(Q)$ coincides with the symmetric space $G/K$, where $K$ is the isotropy subgroup in $G$ of $[e_{n+1}]\in\mathrm H_\F^n(Q)$ (see \cite[\S3]{Lauret12}).

Let $\Gamma_Q=\U(Q,\F)\cap\M(n+1,\mathcal O)$ denote the unimodular matrices in $\U(Q,\F)$, and let $\Gamma_Q^0=\Gamma_Q\cap G$.
It is a simple matter to see that
\[
[\Gamma_Q:\Gamma_Q^0]=
\begin{cases}
  4&\text{ if $\F=\R$},\\
  \omega&\text{ if $\F=\C$},
\end{cases}
\]
where $\omega$ is the number of units in $\mathcal O$.
These groups act discontinuously on $\mathrm H_\F^n(Q)$ and the quotients $\Gamma_Q\backslash \mathrm H_\F^n(Q)$ and $\Gamma_Q^0\backslash \mathrm H_\F^n(Q)$ have finite volume.
Moreover, $\Gamma_Q^0\backslash \mathrm H_\F^n(Q)$ is compact if and only if the $\F$-hermitian form $Q$ is anisotropic over the quotient field of $\mathcal O$.

\subsection{Exceptional eigenvalues}\label{subsec:upper_bounds}

Let $\Delta$ be the Laplace-Beltrami operator on $\mathrm H_\F^n(Q)$.
For a general discrete non-cocompact subgroup of finite covolume of $G$,
the operator
\[
-\Delta:L^2(\Gamma\backslash \mathrm H_\F^n(Q))\longrightarrow L^2(\Gamma\backslash \mathrm H_\F^n(Q))
\]
is known to be essentially self-adjoint and positive and one has the spectral decomposition
\[
L^2(\Gamma\backslash \mathrm H_\F^n(Q)) = L^2_d(\Gamma\backslash \mathrm H_\F^n(Q)) \oplus^{\bot} L^2_c(\Gamma\backslash \mathrm H_\F^n(Q)),
\]
where the spectrum of $\Delta$ is discrete (resp.\ continuous) in $L^2_d(\Gamma\backslash \mathrm H_\F^n(Q))$ (resp.\ $L^2_c(\Gamma\backslash \mathrm H_\F^n(Q))$).
Let $\rho=(n+1)r/2-1$ where $r=\dim_\R(\F)$.
There are only finitely many eigenvalues of the discrete spectrum of $\Delta$ (counted with multiplicities) such that
\[
0<\lambda_1\leq\dots\leq\lambda_N<\rho^2.
\]
These are called \emph{exceptional eigenvalues}.

\begin{definition}\label{def2:cong-subg}
Let $Q$ be an $\F$-hermitian form with coefficients in the quotient field of $\mathcal O$ with signature $(n,1)$.
A subgroup of $\Gamma_Q^0$ is called a \emph{congruence subgroup} if it contains
\[
\Gamma_Q^0(\mathfrak a):=
\left\{
g\in \Gamma_Q^0: g\equiv I_{n+1}\pmod{\mathfrak a}
\right\}
\]
for some ideal $\mathfrak a$ of $\mathcal O$.
\end{definition}

We recall that the error term in the main formula of Theorem~\ref{thm1:main_form} is $O(t^\tau)$ where $\tau=\tau(\Gamma_Q^0)$ is given by \eqref{eq1:tau} associated to the discrete subgroup $\Gamma_Q^0$.
Hence, a lower bound for $\lambda_1$ makes the error term smaller.
Our goal in this section is to apply known lower bound for $\lambda_1(\Gamma\ba\mathrm H_\F^n)$ with $\Gamma$ a congruence subgroup, obtaining upper bound for $\tau$.

We begin in the hyperbolic plane case (i.e.\ $\F=\R$ and $n=2$).
The following result is due to Kim and Sarnak~\cite{Kim-Sarnak}.

\begin{theorem}\label{thm2:lambda_1n=2}
For any congruence subgroup $\Gamma<\SL(2,\Z)$, the first nonzero eigenvalue $\lambda_1$ for the Laplace-Beltrami operator on $\Gamma\backslash \mathrm H_\R^2$ satisfies $\lambda_1\geq975/4096 \approx 0.238\dots\,.$
\end{theorem}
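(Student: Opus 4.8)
The statement is not something one proves from scratch; it is a citation of a deep theorem, so my "proof" plan is really a plan to explain the logical chain that yields the numerical bound $975/4096$, and then to record it as a corollary of the cited work. The plan is to recall that, by the correspondence between Maass cusp forms on $\Gamma_0(N)\backslash\mathrm H_\R^2$ and automorphic representations of $\GL(2,\mathbb{A}_\Q)$, a lower bound on $\lambda_1$ for all congruence subgroups is equivalent to an upper bound on how far the local components $\pi_p$ (including $\pi_\infty$) of a cuspidal representation can be from tempered — i.e.\ a bound on the Satake parameters at the archimedean place. Writing $\lambda_1 = 1/4 - \theta^2$, the Ramanujan--Selberg conjecture is $\theta = 0$, and a bound $|\theta|\le\theta_0$ translates into $\lambda_1 \ge 1/4 - \theta_0^2$.

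The key step is to invoke the functoriality results of Kim and Shahidi and Kim: the existence of the symmetric-cube and symmetric-fourth-power lifts $\operatorname{Sym}^3:\GL_2\to\GL_4$ and $\operatorname{Sym}^4:\GL_2\to\GL_5$ as automorphic (isobaric) representations, together with the Rankin--Selberg machinery and the known bounds toward Ramanujan for $\GL_n$ with $n\le 5$ coming from the analytic theory of $L$-functions (the Luo--Rudnick--Sarnak / Jacquet--Shalika type estimates applied to these lifts). Running this argument at the archimedean place gives $\theta_0 = 7/64$; then $\lambda_1 \ge 1/4 - (7/64)^2 = 1/4 - 49/4096 = (1024-49)/4096 = 975/4096 \approx 0.238$. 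I would simply cite \cite{Kim-Sarnak} (the appendix of Kim's paper, joint with Sarnak) for this, since reproducing the $\operatorname{Sym}^4$ construction and the attached bound is far outside the scope of the present paper.

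The one genuinely elementary point worth spelling out is the reduction from "$\Gamma$ an arbitrary congruence subgroup of $\SL(2,\Z)$" to "$\Gamma$ a principal congruence subgroup $\Gamma(N)$", and from there to the Hecke congruence groups $\Gamma_0(N)$ (with nebentypus) to which the adelic/representation-theoretic statement directly applies: since every congruence subgroup contains some $\Gamma(N)$, one has $\lambda_1(\Gamma\backslash\mathrm H_\R^2) \ge \lambda_1(\Gamma(N)\backslash\mathrm H_\R^2)$ because $L^2(\Gamma\backslash\mathrm H_\R^2)$ embeds isometrically and $\Delta$-equivariantly into $L^2(\Gamma(N)\backslash\mathrm H_\R^2)$; and $\Gamma(N)$ is conjugate into $\Gamma_1(N^2)\subset\Gamma_0(N^2)$, reducing to the case handled by the spectral theory of $\GL_2$-automorphic forms. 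So the only obstacle, if one insisted on self-containment, is precisely the part one cannot avoid citing: the automorphy of $\operatorname{Sym}^4$ and the resulting $7/64$ bound. Accordingly the proof is: apply the monotonicity reduction above, then quote \cite{Kim-Sarnak}. $\qed$
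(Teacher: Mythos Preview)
Your proposal is correct and in fact goes well beyond what the paper does: the paper gives no proof whatsoever of this theorem, merely stating it and attributing it to Kim and Sarnak~\cite{Kim-Sarnak}. Your plan---cite \cite{Kim-Sarnak}, with the optional gloss that $\lambda_1 = 1/4 - \theta^2$ and the $\operatorname{Sym}^4$ lift yields $\theta\le 7/64$, hence $\lambda_1\ge 975/4096$---is exactly the right treatment, and the added explanation of the reduction from arbitrary congruence subgroups to the adelic setting is a welcome bonus the paper omits.
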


In Introduction we mentioned that, for any $n\geq4$, there are congruence subgroups such that their associated hyperbolic manifolds have exceptional spectrum (see \cite[\S6]{Cogdell-Li-Piatetski-Shapiro-Sarnak}).
The following result is the optimal lower bound for $n\geq4$.
It was proved independently by Elstrodt, Grunewald and Mennicke~\cite[Thm.~A]{EGMKloosterman} and Cogdell, Li, Piatetski-Shapiro and Sarnak~\cite{Cogdell-Li-Piatetski-Shapiro-Sarnak}.

\begin{theorem}\label{thm2:lambda_1n>2}
  Let $n\geq3$ and let $Q$ be a quadratic form with rational coefficients such that $Q$ is of signature $(n,1)$ and isotropic over $\Q$.
  For any congruence subgroup $\Gamma<\Gamma_Q^0$, the first nonzero eigenvalue $\lambda_1$ of the Laplace-Beltrami operator on $\Gamma\backslash \mathrm H_\R^n(Q)$ satisfies
  \[
  \lambda_1\geq\dfrac{2n-3}{4}
  \]
\end{theorem}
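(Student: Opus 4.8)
The plan is to reduce Theorem~\ref{thm2:lambda_1n>2} to the classical Selberg/Ramanujan-type bound for $\mathrm{GL}(2)$ via the exceptional isogeny between $\SO(n,1)$-type groups and products of $\mathrm{SL}(2)$, exactly as in the original arguments of~\cite{EGMKloosterman} and~\cite{Cogdell-Li-Piatetski-Shapiro-Sarnak}; since this is a survey section, I would present the reduction rather than reprove it from scratch. First I would pass from the quadratic form $Q$ of signature $(n,1)$ to the adelic setting: the group $\SO(Q)$ is an inner form of the split $\SO(n,1)$, and an automorphic representation $\pi$ of $\SO(Q)(\mathbb A)$ occurring in $L^2(\Gamma\ba \mathrm H_\R^n(Q))$ with Laplace eigenvalue $\lambda_1$ corresponds, through the Archimedean parameter, to a complementary-series (or tempered) representation of $\SO(n,1)(\R)$. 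The eigenvalue $\lambda_1 = \rho^2 - \nu_1^2$ with $\rho = \tfrac{n-1}{2}$, so the bound $\lambda_1 \geq \tfrac{2n-3}{4}$ is equivalent to $\nu_1 \leq \tfrac12$, i.e.\ the spectral parameter $s=\tfrac12+\nu_1$ of the corresponding Eisenstein/cusp datum satisfies $\mathrm{Re}(s) \leq 1$, which is precisely the Kim--Sarnak/Selberg $7/64$ (or $3/16$) range pushed through functoriality.

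The key steps, in order, are: (1) use the accidental isomorphisms in low rank and, in general, the theta correspondence / Rankin--Selberg machinery to attach to $\pi$ a cuspidal automorphic representation $\sigma$ of $\mathrm{GL}(2)$ over $\Q$ (or over a suitable quadratic extension) whose Archimedean component controls $\nu_1$; (2) invoke the congruence hypothesis on $\Gamma$ to guarantee that $\sigma$ is itself of congruence type, so that the Kim--Sarnak bound (Theorem~\ref{thm2:lambda_1n=2}, in its $\mathrm{GL}(2)$ form) applies to $\sigma$; (3) translate the $\mathrm{GL}(2)$ spectral bound $\lambda_1(\sigma) \geq 975/4096$ through the explicit matching of infinitesimal characters into the asserted inequality $\lambda_1 \geq \tfrac{2n-3}{4}$, checking that the numerology works out. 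The isotropy hypothesis on $Q$ over $\Q$ is used to ensure $\Gamma_Q^0\ba\mathrm H_\R^n(Q)$ is noncompact and that the relevant representations are genuinely of the Eisenstein/cuspidal type handled by the $\mathrm{GL}(2)$ theory, rather than coming from a division-algebra form with no such transfer available.

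The main obstacle is step (1): constructing the transfer to $\mathrm{GL}(2)$ uniformly in $n$. For $n=3$ this is Sarnak's treatment via $\mathrm{PSL}(2,\mathbb C)$ and Bianchi groups, and for small $n$ one can use the exceptional isogenies ($\SO(4,1)\sim \mathrm{Sp}(1,1)$, $\SO(5,1)\sim \mathrm{SL}(2,\mathbb H)$, etc.), but for general $n$ one needs the full strength of either the doubling method / theta lift from $\SO(n,1)$ down to $\mathrm{SL}(2)$ or of Langlands functoriality for the relevant classical groups; this is where~\cite{EGMKloosterman} and~\cite{Cogdell-Li-Piatetski-Shapiro-Sarnak} do the real work (the former via Kloosterman sums and a Kuznetsov-type formula adapted to $\SO(n,1)$, the latter via the spectral theory of the exterior-square lift). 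In a survey section the honest thing is to state the theorem, point to these two references for the proof, and record only the numerical deduction $\rho^2 - \tfrac14 = \tfrac{(n-1)^2}{4} - \tfrac14 = \tfrac{n^2-2n}{4} \geq \tfrac{2n-3}{4}$ consistency check (equality failing — in fact $\tfrac{2n-3}{4}$ is the genuinely sharp bound coming from the $\tfrac{1}{4}$-improvement being only $\tfrac{1}{4}$ on the $\mathrm{SL}(2)$ side and scaling differently), so that the reader sees why $\tfrac{2n-3}{4}$ rather than $\rho^2$ is what the $\mathrm{GL}(2)$ input yields.
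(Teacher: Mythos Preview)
The paper does not prove this theorem; it merely cites \cite{EGMKloosterman} and \cite{Cogdell-Li-Piatetski-Shapiro-Sarnak} as independent proofs and then uses the statement as a black box in Corollary~\ref{cor2:cota_tau_R}. Your instinct that ``the honest thing is to state the theorem and point to these two references'' is therefore exactly what the paper does.

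That said, the sketch you offer around the citation contains a genuine numerical error. You assert that $\lambda_1 \geq \tfrac{2n-3}{4}$ is equivalent to $\nu_1 \leq \tfrac12$. With $\rho = \tfrac{n-1}{2}$ and $\lambda_1 = \rho^2 - \nu_1^2$, the bound $\lambda_1 \geq \tfrac{2n-3}{4}$ is actually equivalent to
\[
\nu_1^2 \;\leq\; \frac{(n-1)^2}{4} - \frac{2n-3}{4} \;=\; \frac{n^2-4n+4}{4} \;=\; \frac{(n-2)^2}{4},
\]
i.e.\ $\nu_1 \leq \tfrac{n-2}{2}$, which is precisely the computation the paper performs in the proof of Corollary~\ref{cor2:cota_tau_R}. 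Your condition $\nu_1 \leq \tfrac12$ would instead give $\lambda_1 \geq \tfrac{n(n-2)}{4}$, a strictly stronger (and unproven) bound for $n\geq 4$; your ``consistency check'' $\tfrac{n(n-2)}{4} \geq \tfrac{2n-3}{4}$ only confirms this discrepancy rather than resolving it.

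Two smaller points. First, the result dates to 1990--91, so the $\GL(2)$ input in the original arguments is Weil's bound on Kloosterman sums (equivalently Selberg's $\tfrac{3}{16}$), not Kim--Sarnak's $\tfrac{975}{4096}$. Second, neither cited paper proceeds via the functorial transfer or ``exterior-square lift'' you describe: \cite{EGMKloosterman} develops a Kuznetsov-type formula directly on $\SO(n,1)$ and bounds Clifford-algebra Kloosterman sums, while \cite{Cogdell-Li-Piatetski-Shapiro-Sarnak} constructs Poincar\'e series on $\SO(n,1)$ and analyzes their spectral expansion. The theta lift from $\SL(2)$ appears in \cite{Cogdell-Li-Piatetski-Shapiro-Sarnak}, but to \emph{produce} exceptional eigenvalues (showing the bound is sharp), not to bound them from below.
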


\begin{corollary}\label{cor2:cota_tau_R}
  Let $\F=\R$, let $\Gamma$ be a congruence subgroup acting on the $n$-dimensional real hyperbolic space, and let $\tau=\tau(\Gamma)$ given by \eqref{eq1:tau} associated to $\Gamma$.
  Then
  \begin{equation}\label{eq2:tau_R}
  \tau\leq\begin{cases}
    n-3/2&\text{ if $n\geq3$,}\\[2mm]
    2/3 &\text{ if $n=2$.}
  \end{cases}
  \end{equation}
\end{corollary}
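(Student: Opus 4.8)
The plan is to split according to the two ranges $n=2$ and $n\geq3$ and, in each, to substitute the relevant lower bound for $\lambda_1$ into the definition \eqref{eq1:tau} of $\tau$. Throughout one uses that for $\F=\R$ we have $\rho=\rho_{n,\R}=(n-1)/2$ and $\varepsilon=0$, so that the ``otherwise'' branch of \eqref{eq1:tau} equals exactly $2\rho\,\frac{n}{n+1}=\frac{n(n-1)}{n+1}$, while the first branch equals $\nu_1+\rho=\rho+\sqrt{\rho^2-\lambda_1}$ and occurs only when an exceptional eigenvalue $\lambda_1<\rho^2$ is present; as a function of $\lambda_1$ this expression is decreasing. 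Hence a lower bound on $\lambda_1$ gives an upper bound on $\nu_1+\rho$, and the statement reduces to comparing a few explicit rational numbers.

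For $n\geq3$ I would invoke Theorem~\ref{thm2:lambda_1n>2}, which yields $\lambda_1\geq(2n-3)/4$ whenever $\Gamma\ba\mathrm H_\R^n(Q)$ has exceptional spectrum. A one-line computation gives $\rho^2-(2n-3)/4=(n-2)^2/4$, hence $\nu_1\leq(n-2)/2$ and therefore $\nu_1+\rho\leq\frac{2n-3}{2}=n-\tfrac{3}{2}$. It then remains to check that the other branch is also at most $n-\tfrac{3}{2}$, i.e.\ that $\frac{n(n-1)}{n+1}\leq n-\tfrac{3}{2}$; clearing denominators this becomes $2n(n-1)\leq(2n-3)(n+1)$, which simplifies to $n\geq3$. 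So both branches of \eqref{eq1:tau} are bounded by $n-3/2$, as claimed.

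For $n=2$ I would instead use the Kim--Sarnak bound $\lambda_1\geq975/4096$ of Theorem~\ref{thm2:lambda_1n=2}. Since $\rho^2=1/4=1024/4096$, any exceptional eigenvalue satisfies $\nu_1\leq\sqrt{(1024-975)/4096}=7/64$, whence $\nu_1+\rho\leq 7/64+1/2=39/64<2/3=2\rho\,\frac{n}{n+1}$. Thus the first branch of \eqref{eq1:tau} never applies and $\tau=2/3$, which is the asserted bound.

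These computations are entirely routine, so there is no real obstacle. The only points needing a little care are: (i) the degenerate case in which $\Gamma\ba\mathrm H_\R^n(Q)$ has no exceptional eigenvalue, where one must note that the error exponent is then $2\rho\,\frac{n}{n+1}+\varepsilon$, equivalently that the edge-of-continuous-spectrum contribution $\rho+0=\rho$ never exceeds $2\rho\,\frac{n}{n+1}$ (which holds since $n\geq1$); and (ii) confirming that Theorems~\ref{thm2:lambda_1n=2} and \ref{thm2:lambda_1n>2} do apply to the congruence subgroup $\Gamma$ at hand (in the cocompact situation one appeals instead to the analogous spectral-gap bound). Neither raises a genuine difficulty.
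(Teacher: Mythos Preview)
Your proof is correct and follows essentially the same approach as the paper: for $n\geq3$ you invoke Theorem~\ref{thm2:lambda_1n>2} to bound $\nu_1\leq(n-2)/2$ and hence $\nu_1+\rho\leq n-3/2$, and for $n=2$ you use the Kim--Sarnak bound to force the ``otherwise'' branch of \eqref{eq1:tau}. Your argument is in fact slightly more careful than the paper's in that you explicitly verify $\tfrac{n(n-1)}{n+1}\leq n-\tfrac32$ for $n\geq3$, so that the second branch of \eqref{eq1:tau} is also covered; the paper simply writes $\tau\leq\nu_1+\rho$ without separately treating that branch.
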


\begin{proof}
We recall that $\nu_j:=\sqrt{\rho^2-\lambda_j}$.
For $n\geq3$, Theorem~\ref{thm2:lambda_1n>2} imply that
\[
\nu_1 \leq \sqrt{\left(\frac{n-1}2\right)^2 -\frac{2n-3}4}=\frac{n-2}2.
\]
From \eqref{eq1:tau} we conclude that $\tau \leq \nu_1+\rho\leq (n-1)/2+(n-2)/2=n-3/2$.

For $n=2$, Theorem~\ref{thm2:lambda_1n=2} implies that \eqref{eq1:weaker_cond} holds since
\[
\frac{4n\,\rho^2}{(n+1)^2}=\frac29=0.22\dots<\frac{975}{4096}\leq\lambda_1,
\]
then $\tau=2\rho\, n/(n+1)=2/3$ by \eqref{eq1:tau}.
\end{proof}

\begin{remark}
Note that Corollary~\ref{cor2:cota_tau_R} gives the best error term for \eqref{eq1:main-form} that can be obtained from \cite{Lauret12}, i.e.\ having used the lattice point theorem of Levitan~\cite{Levitan}.
\end{remark}

Bounds of this kind are not known in the complex case.
A related result was proved by Li~\cite[Cor.~1.4]{Li}.

\begin{theorem}\label{thm2:lambda_1_C}
Let $n\geq2$ and let $\Gamma$ be a non-cocompact congruence subgroup of $\SU(n,1)$.
The first nonzero eigenvalue $\lambda_1$ for the Laplace-Beltrami operator on the space of nondegenerate forms in $L^2(\Gamma\backslash \mathrm H_\C^n(Q))$ satisfies
\[
\lambda_1\geq 2n-1.
\]
\end{theorem}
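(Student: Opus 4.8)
The plan is to deduce Theorem~\ref{thm2:lambda_1_C} from the known bounds towards the Ramanujan conjecture for automorphic representations of $\GL(2)$ or, more precisely, for the unitary group $\U(n,1)$, via the theta correspondence. First I would pass from the geometric statement about eigenvalues on $\Gamma\ba\mathrm H_\C^n(Q)$ to a representation-theoretic one: an eigenfunction on the space of nondegenerate forms (that is, with a nonzero Fourier coefficient along a nondegenerate unipotent orbit) generates an automorphic representation $\pi$ of $G(\mathbb A)$, where $G=\U(Q)$, whose archimedean component $\pi_\infty$ is a spherical representation of $\SU(n,1)$ appearing in $L^2$; the Laplace eigenvalue $\lambda_1$ is then a function of the Harish-Chandra parameter of $\pi_\infty$. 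The bound $\lambda_1\ge 2n-1$ is exactly the assertion that $\pi_\infty$ is \emph{tempered} — its parameter lies on the unitary axis rather than strictly inside the complementary-series region — so the whole content is a temperedness statement at the archimedean place for this particular family of representations.

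The mechanism that forces temperedness is the restriction to the nondegenerate-forms subspace: by the work of Jian-Shu Li on the theta correspondence for the dual pair inside a symplectic group, a cusp form (or, more generally, an $L^2$ automorphic form) of $\U(n,1)$ with a nondegenerate Fourier coefficient is a theta lift from a smaller group — concretely from $\U(1,1)$ (or $\SL(2)$), whose Ramanujan bounds are well understood. So the key steps would be: (i) identify the relevant reductive dual pair and check that the nondegeneracy hypothesis guarantees that the form in question is in the image of the theta lift from the small group; (ii) transport the archimedean local bound — the Kim–Sarnak–type bound towards Ramanujan at the real place, or in the cases handled by Li the full temperedness coming from the structure of the local theta correspondence — along the lift; (iii) translate the resulting parameter back through the spherical correspondence $\mathrm H_\C^n(Q)=G/K$ into the inequality $\lambda_1\ge 2n-1$, using that $\rho_{n,\C}^2=n^2$ so the tempered threshold is $n^2-(\tfrac12)^2\cdot$ (normalization) $=2n-1$ after the standard bookkeeping of the spectral parameter.

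Since this is a citation of \cite[Cor.~1.4]{Li}, I would not reprove the theta-correspondence machinery; instead the write-up would consist of recalling Li's result in the adelic language, spelling out the dictionary between "nondegenerate forms" here and "forms with nonvanishing nondegenerate Fourier coefficient" there, and carrying out the elementary conversion between Harish-Chandra parameter and Laplace eigenvalue for the symmetric space $\mathrm H_\C^n$. The main obstacle I anticipate is purely notational rather than mathematical: making sure the normalization of the Laplacian, of $\rho=\rho_{n,\C}=n$, and of the spectral parameter $\nu$ all match between Li's conventions and the ones fixed in Subsection~\ref{subsec:hyp_spaces}, so that "tempered" on the $\U(n,1)$ side lands exactly on the numerical value $2n-1$ on ours. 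A secondary point to be careful about is that the statement is only for the \emph{non-cocompact} case and only on the \emph{nondegenerate-forms} subspace — the degenerate part of the spectrum (including the residual/Eisenstein contributions and the constant function) is explicitly excluded, which is why this does not contradict the exceptional eigenvalues found numerically in Conjecture~\ref{conj1:existence}.
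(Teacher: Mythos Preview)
The paper does not supply its own proof of this theorem: it is quoted verbatim from \cite[Cor.~1.4]{Li}, and you correctly note that. So the comparison is between your sketch and Li's actual argument, and here there are two concrete problems.

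First, the mechanism you describe is not the one Li uses. Li's paper is titled \emph{Kloosterman--Selberg zeta functions on complex hyperbolic spaces}; the method is the Poincar\'e-series / Kloosterman-sum approach parallel to \cite{Cogdell-Li-Piatetski-Shapiro-Sarnak} and \cite{EGMKloosterman} for $\SO(n,1)$, transported to $\SU(n,1)$. Theta lifting enters the story in the opposite direction---in \cite[\S6]{Cogdell-Li-Piatetski-Shapiro-Sarnak} it is used to \emph{construct} exceptional spectrum, not to bound it away. The restriction to ``nondegenerate forms'' in the statement is precisely the condition that the Fourier coefficients along the relevant unipotent radical do not all vanish, which is what one needs for the Poincar\'e-series argument to bite; it is not an input to a theta-lift argument.

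Second, your numerics are off: the bound $\lambda_1\ge 2n-1$ is \emph{not} temperedness. With $\rho_{n,\C}=n$ one has $\nu_1=\sqrt{n^2-\lambda_1}$, so $\lambda_1\ge 2n-1$ is equivalent to $\nu_1\le n-1$, whereas tempered would be $\nu_1=0$, i.e.\ $\lambda_1\ge n^2$. The value $n-1$ is the complex-hyperbolic analogue of the Weil/Selberg ``square-root'' saving coming from bounds on generalized Kloosterman sums (compare $\nu_1\le\tfrac{n-2}{2}$ in Corollary~\ref{cor2:cota_tau_R} for the real case). So the step ``tempered threshold $=2n-1$ after the standard bookkeeping'' would not go through, and any argument built on transporting full temperedness from a smaller group would overshoot the stated conclusion.
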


\begin{remark}\label{rem2:cota_tau_C}
The analogous procedure to that in Corollary~\ref{cor2:cota_tau_R} in this case, by assuming that Theorem~\ref{thm2:lambda_1_C} holds for every form in $L^2(\Gamma\backslash \mathrm H_\C^n(Q))$, gives that $\nu_1\leq n-1$, hence
\begin{equation}\label{eq2:tau_C}
\tau\leq \nu_1+\rho= n-1+n=2n-1.
\end{equation}
\end{remark}

\section{Calculation of local densities}\label{sec:local_densities}
The purpose in this section is to compute the \emph{local density} $\delta(Q,-k)$ of the representation $Q[x]=-k$.
Here we assume $\F=\R$ or $\C$.
Recall that $r=\dim_\R(\F)$, $\mathcal O$ is $\Z$ in the real case and the ring of integers of an imaginary quadratic number field if $\F=\C$, and $d_\mathcal O$ denotes the discriminant of the quotient field of $\mathcal O$.

For an arbitrary nondegenerate $\F$-hermitian forms and any $k\in\Z$, the local density of the representation $Q[x]=k$ is defined by
\begin{equation}\label{eq3:density}
\delta(Q,k) = \prod_p \;\delta_p(Q,k),
\end{equation}
where the product runs over all positive prime numbers and
\begin{equation}\label{eq3:p-denstity}
\delta_p(Q,k):=
\lim_{j\to\infty} \frac{\# \left\{x\in{\big(\mathcal O /p^j\mathcal O\big)}^{m}: Q[x]\equiv k\;\bmod p^j \right\}}{p^{j(r m-1)}}.
\end{equation}

Throughout this section, we only consider diagonal integral nondegenerate $\F$-hermitian forms of rank $m$, thus, they are given by matrices of the form
\begin{equation}\label{eq3:Q_diagonal}
\begin{pmatrix}
  a_1\\ &\ddots\\ &&a_m
\end{pmatrix}
  \qquad (a_i\in\Z,\;a_i\neq0\;\forall i).
\end{equation}
Note that the signature of this matrix is $(m_1,m_2)$ where $m_1$ (resp.\ $m_2$) denotes the number of positive (resp.\ negative) entries in the diagonal.

As usual, let $\zeta(s)$ denote the Riemann zeta function.
We shall use the well known formula
\begin{equation}\label{eq3:zeta(m)}
\zeta(m)=\frac{(2\pi)^m|B_m|}{2\, m!},
\end{equation}
for every $m$ odd integer, where $B_m$ denotes the $m$-th Bernoulli number

We will denote by $\legendre{\cdot}{m}$ the Kronecker symbol.
If $D$ is a quadratic discriminant (i.e.\ $D\equiv1\pmod4$ square free or $D\equiv0\pmod4$ with $D/4\equiv 2,3\pmod4$ square free), let $L(s,D)$ denote the Dirichlet $L$-series defined by
\[
L(s,D)=\sum_{m=1}^\infty\legendre{D}{m}\,m^{-s}=\prod_{p}\left( 1-\legendre{D}{p}\,p^{-s} \right)^{-1}.
\]

\subsection{Real case}
Suppose $\F=\R$, thus $\mathcal O=\Z$, $d_\mathcal O=1$ and $r=1$.
In this subsection we shall use the terminology `quadratic form' instead $\R$-hermitian form.
Every $p$-local density $\delta_p(Q,k)$ has been computed by Yang~\cite{Yang} for every integral nondegenerate quadratic form $Q$ and any $k\in\Z$.
We will use his results several times, as in the following lemma.

\begin{lemma}\label{lem3:delta_pR}
  Let $Q$ be a quadratic form as in \eqref{eq3:Q_diagonal}, let $k\in\Z$ and let $p$ be a positive prime number such that $p\nmid 2\det(Q)$.
  Write $k=p^c\ell$ with $p\nmid \ell$.
  Set $q=p^{-\frac{m-2}2}$ and
  \[
  \epsilon =
    \begin{cases}
      \legendre{(-1)^{\frac m2}\, \det(Q)}{p} & \text{if $m$ is even},\\[3mm]
      \legendre{(-1)^{\frac{m-1}2}\, \det(Q)\, \ell}{p} &\text{if $m$ is odd}.
    \end{cases}
  \]
  Then
\begin{equation}\label{eq3:delta_p(Q,k)}
  \delta_p(Q,k) =
  \begin{cases}
    \left(1-\epsilon p^{-\frac{m}2}\right) \; \dfrac{\left(\epsilon q\right)^{c+1}-1}{\epsilon q-1}
        &\;\text{if $m$ is even,}\\[4mm]
    \left(1- p^{-(m-1)}\right) \; \dfrac{q^{c+1}-1}{q^2-1}
        &\;\text{if $m$ is odd and $c$ is odd,}\\[4mm]
    \left(1-p^{-(m-1)}\right) \; \dfrac{q^c-1}{q^2-1}  +  (1+\epsilon p^{-\frac{m-1}2})q^c
        &\;\text{if $m$ is odd and $c$ is even.}
  \end{cases}
\end{equation}
In particular, if $p\nmid k$, then
\begin{equation}\label{eq3:delta_p(Q,k)_k}
  \delta_p(Q,k) =
  \begin{cases}
    1-\legendre{(-1)^{\frac{m}2} \det(Q)}{p} p^{-\frac{m}2}
        &\;\text{if $m$ is even,}\\[2mm]
    1+\legendre{(-1)^{\frac{m-1}2} \det(Q)\, \ell}{p} p^{-\frac{m-1}2}
        &\;\text{if $m$ is odd.}
  \end{cases}
\end{equation}
\end{lemma}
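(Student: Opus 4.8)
The plan is to reduce the computation to two classical ingredients: Hensel lifting of nonsingular solutions, and the point count of a nondegenerate quadric over $\F_p$. For $j\ge1$ set $r_j:=\#\{x\in(\Z/p^j\Z)^m:Q[x]\equiv k\pmod{p^j}\}$, so that $\delta_p(Q,k)=\lim_{j\to\infty}r_j/p^{j(m-1)}$ by \eqref{eq3:p-denstity}. Since $p\nmid 2\det Q$, the reduction $\bar Q$ of $Q$ modulo $p$ is nondegenerate, hence has trivial radical; consequently $Qx\not\equiv0\pmod p$ holds exactly when $x\not\equiv0\pmod p$, and we call such $x$ \emph{primitive}. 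Writing $c:=v_p(k)$, I would split $r_j=r_j^{\mathrm{prim}}+r_j^{\mathrm{imp}}$, where $r_j^{\mathrm{imp}}$ counts the solutions with $p\mid x_i$ for every $i$.

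For the primitive part, a primitive solution of $Q[x]\equiv k\pmod{p^\nu}$ has exactly $p^{m-1}$ lifts to primitive solutions modulo $p^{\nu+1}$, because the lifting condition is a single non-trivial $\F_p$-linear equation in $m$ unknowns; iterating from $\nu=1$ gives $r_j^{\mathrm{prim}}=p^{(m-1)(j-1)}N_p$, where $N_p$ is the number of primitive solutions mod $p$. If $p\nmid k$ all solutions mod $p$ are primitive, so $N_p=\#\{\bar x\in\F_p^m:\bar Q[\bar x]=k\}$; if $p\mid k$ the only imprimitive solution mod $p$ is $\bar x=0$, so $N_p=\#\{\bar x:\bar Q[\bar x]=0\}-1$. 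Thus $\delta_p^{\mathrm{prim}}(Q,k):=\lim_j r_j^{\mathrm{prim}}/p^{j(m-1)}=p^{-(m-1)}N_p$ depends only on whether $p\mid k$ (and, when $p\nmid k$ and $m$ is odd, on $\legendre{k}{p}$). I would then compute the two quadric counts in the standard way, writing the indicator of $\{\bar Q[\bar x]=b\}$ through the additive characters of $\F_p$; the inner sums factor into one-variable quadratic Gauss sums $\sum_{y\in\F_p}e^{2\pi iay^2/p}=\legendre{a}{p}g$ with $g^2=\legendre{-1}{p}p$, and one gets, for $m$ even, $p^{m-1}-\legendre{(-1)^{m/2}\det Q}{p}p^{m/2-1}$ if $b\neq0$ and $p^{m-1}+\legendre{(-1)^{m/2}\det Q}{p}(p-1)p^{m/2-1}$ if $b=0$, while for $m$ odd one gets $p^{m-1}+\legendre{(-1)^{(m-1)/2}\det Q\,b}{p}p^{(m-1)/2}$ if $b\neq0$ and simply $p^{m-1}$ if $b=0$. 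Taking $b=k$ with $p\nmid k$ already proves the ``in particular'' formula \eqref{eq3:delta_p(Q,k)_k}.

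For the imprimitive part, if $c\le1$ there are no imprimitive solutions once $j\ge2$, since $x=py$ forces $p^2\mid Q[x]$; and if $c\ge2$ the substitution $x=py$ is a bijection onto the solutions of $Q[y]\equiv k/p^2\pmod{p^{j-2}}$, whence $r_j^{\mathrm{imp}}=p^m r_{j-2}(k/p^2)$. Dividing by $p^{j(m-1)}$ and letting $j\to\infty$ yields, with $q=p^{-(m-2)/2}$,
\[
\delta_p(Q,k)=\delta_p^{\mathrm{prim}}(Q,k)+q^2\,\delta_p(Q,k/p^2)\qquad(c\ge2).
\]
Iterating this relation down to $p$-adic valuation $0$ or $1$, and noting that every $\delta_p^{\mathrm{prim}}$-term it produces carries a positive power of $p$ (hence equals its $(p\mid k)$-value) except the terminal term $\delta_p(Q,\ell)$ reached when $c$ is even, an elementary induction on $c$ summing the geometric series collapses to the three displayed formulas. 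For odd $m$ the recursion terminates at $\delta_p(Q,p\ell)=1-p^{-(m-1)}$ when $c$ is odd and at $\delta_p(Q,\ell)=1+\legendre{(-1)^{(m-1)/2}\det Q\,\ell}{p}p^{-(m-1)/2}$ when $c$ is even, which yields the two odd-$m$ cases; for even $m$ the identities $\delta_p^{\mathrm{prim}}(Q,\ell)=1-\epsilon p^{-m/2}$ and $\delta_p^{\mathrm{prim}}(Q,p\ell)=(1-\epsilon p^{-m/2})(1+\epsilon q)$ make the two parities of $c$ merge into the single sum $(1-\epsilon p^{-m/2})\sum_{i=0}^{c}(\epsilon q)^i=(1-\epsilon p^{-m/2})\frac{(\epsilon q)^{c+1}-1}{\epsilon q-1}$.

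The main obstacle is the first ingredient: pinning down the Gauss-sum normalization and the resulting Kronecker-symbol factors exactly, since keeping track of $g^m$ versus $g^{m+1}$ and of the $\legendre{-1}{p}^{\lfloor m/2\rfloor}$ factor is precisely what produces the shape of $\epsilon$ and the even/odd-$m$ split. Everything else — the two Hensel steps and the induction solving the recursion — is routine but bookkeeping-heavy, and one has to be careful that the odd-$m$/even-$c$ versus odd-$m$/odd-$c$ dichotomy comes out with the stated endpoints while the even-$m$ case telescopes into one formula. A shortcut that bypasses the first ingredient entirely is to invoke the explicit $p$-adic density formulas of Yang~\cite{Yang}, valid for every integral quadratic form, and simply specialize them to the case $p\nmid 2\det Q$; this is presumably the route the paper takes.
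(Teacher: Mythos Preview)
Your proof is correct and, as you surmise in your final paragraph, the paper indeed takes the shortcut you describe: it simply specializes Yang's explicit local density formula~\cite{Yang}. Concretely, the paper observes that under the hypothesis $p\nmid 2\det Q$ each diagonal entry is a $p$-adic unit, so in Yang's notation $l_i=0$, $\varepsilon_i=a_i$, and the index sets $L(j,1)$ alternate between $[1,m]$ and $\emptyset$; plugging these into \cite[Thm.~3.1]{Yang} yields the even-$m$ case in three lines, and the odd-$m$ case is declared similar.

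Your route is genuinely different and more self-contained: you rederive the relevant special case of Yang's theorem from first principles via the primitive/imprimitive decomposition, the Hensel step $r_j^{\mathrm{prim}}=p^{(m-1)(j-1)}N_p$, the standard Gauss-sum count of $N_p$, and the recursion $\delta_p(Q,k)=\delta_p^{\mathrm{prim}}(Q,k)+q^2\delta_p(Q,k/p^2)$. This buys independence from a heavy external reference at the cost of the Gauss-sum bookkeeping you flag; the paper's approach buys brevity at the cost of a black-box citation (which it needs anyway for the primes $p\mid 2\det Q$ treated elsewhere in Section~\ref{sec:local_densities}). Your verification that $\delta_p^{\mathrm{prim}}(Q,p\ell)=(1-\epsilon p^{-m/2})(1+\epsilon q)$ in the even-$m$ case, making the two parities of $c$ telescope into a single geometric sum, is exactly the algebraic identity that makes the paper's three-line derivation work as well.
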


\begin{proof}
By assumption, $p\neq2$ and $p\nmid a_i$ for every $i$, thus $l_i=0$ and $\varepsilon_i=a_i$ for all $i$, in the notation of \cite{Yang}, hence $L(j,1)=[1,m]$ for $j$ even and $L(j,1)=\emptyset$ for $j$ odd.
If $m$ is even, \cite[Theorem~3.1]{Yang} implies
\begin{align*}
\delta_p(Q,k)
    &= 1+(1-p^{-1}) \sum_{j=1}^c \epsilon^j q^{j} - \epsilon^{c+1} q^{c+1} p^{-1}  \\
    &= 1-(\epsilon q)^{c+1}+(1-p^{-1}) \,\sum_{j=1}^{c+1} \left(\epsilon q\right)^{j}\\
    &= \frac{\left(\epsilon q\right)^{c+1}-1}{\epsilon q-1}\;\left(1-\epsilon p^{-\frac{m}2}\right).
\end{align*}
The proof for even $n$ is similar.
\end{proof}

The cases when $p|2\det(Q)$ can also be computed from \cite{Yang}.

\begin{theorem}\label{thm3:real-density}
  Let $Q$ be a quadratic form as in \eqref{eq3:Q_diagonal} and let $k\in\Z$.
  If $m$ is odd, write $k\det(Q)=c^2 b$ ($c,b\in\Z$) with $b$ square free and set
  \[
  D=\begin{cases}
     (-1)^{\frac{m-1}2} b & \text{if }(-1)^{\frac{m-1}2}b \equiv1\pmod4,\\
    4(-1)^{\frac{m-1}2} b & \text{otherwise.}
  \end{cases}
  \]
  Then
  \begin{equation}\label{eq3:delta(Q,-k)_n_even}
  \delta(Q,k)=
    \frac{L(\frac{m-1}2,D)}{\zeta(m-1)}\;\prod_{p\mid D}
    \frac{\delta_p(Q,k)}{1-p^{-(m-1)}}\prod_{p\nmid D \atop p\mid 2k\det(Q)}
    \frac{\delta_p(Q,k)}{1+\legendre{D}{p} p^{-\frac{m-1}2}}.
  \end{equation}

  If $m$ is even, write $\det(Q)=c^2 b$ ($c,b\in\Z$) with $b$ square free and set
  \[
  D=\begin{cases}
     (-1)^{\frac{m}2} b & \text{if } (-1)^{\frac{m}2} b\equiv1\pmod4,\\
    4(-1)^{\frac{m}2} b & \text{otherwise.}
  \end{cases}
  \]
  Then
  \begin{equation}\label{eq3:delta(Q,-k)_n_odd}
  \delta(Q,k) =
    \frac{1}{L(m/2,D)}\;
    \prod_{p\mid 2k\det(Q)} \frac{\delta_p(Q,k)}{1-\legendre{D}{p}p^{-\frac{m}2}}.
  \end{equation}
\end{theorem}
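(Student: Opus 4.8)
The plan is to compute $\delta(Q,k)=\prod_p\delta_p(Q,k)$ by isolating the contribution of the ``good'' primes (those $p\nmid 2k\det(Q)$), evaluating that infinite product in closed form via the Euler product for a Dirichlet $L$-series, and then correcting for the finitely many ``bad'' primes by multiplying and dividing by the good-prime factor to keep the product convergent. First I would split the product as
\[
\delta(Q,k)=\Bigg(\prod_{p\,\mid\,2k\det(Q)}\delta_p(Q,k)\Bigg)\cdot\Bigg(\prod_{p\,\nmid\,2k\det(Q)}\delta_p(Q,k)\Bigg),
\]
and apply Lemma~\ref{lem3:delta_pR}, specifically the formula \eqref{eq3:delta_p(Q,k)_k} for $p\nmid k$ (note that $p\nmid 2k\det(Q)$ forces $p\neq2$, $p\nmid\det(Q)$ and $p\nmid k$, so $c=0$). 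This gives $\delta_p(Q,k)=1-\legendre{(-1)^{m/2}\det(Q)}{p}p^{-m/2}$ when $m$ is even, and $\delta_p(Q,k)=1+\legendre{(-1)^{(m-1)/2}\det(Q)\,\ell}{p}p^{-(m-1)/2}$ when $m$ is odd, where $\ell$ is the prime-to-$p$ part of $k$, i.e. $\ell\equiv k\bmod p$ since $p\nmid k$.

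The main point is then to recognize these as Euler factors. In the even case, one checks that for $p\nmid 2\det(Q)$ the value $\legendre{(-1)^{m/2}\det(Q)}{p}$ equals $\legendre{D}{p}$ with $D$ the quadratic discriminant attached to the square-free part of $(-1)^{m/2}\det(Q)$ (this is exactly the definition of $D$ in the statement, and it uses the standard fact that the Kronecker symbol $\legendre{D}{\cdot}$ is the primitive quadratic character of conductor $|D|$ and agrees with $\legendre{b}{\cdot}$ up to the bad primes). Hence
\[
\prod_{p\,\nmid\,2k\det(Q)}\Big(1-\legendre{D}{p}p^{-m/2}\Big)
=\prod_{p}\Big(1-\legendre{D}{p}p^{-m/2}\Big)\cdot\prod_{p\,\mid\,2k\det(Q)}\Big(1-\legendre{D}{p}p^{-m/2}\Big)^{-1}
=\frac{1}{L(m/2,D)}\prod_{p\,\mid\,2k\det(Q)}\Big(1-\legendre{D}{p}p^{-m/2}\Big)^{-1},
\]
and substituting back yields \eqref{eq3:delta(Q,-k)_n_odd} after combining the two bad-prime products into $\prod_{p\mid2k\det(Q)}\delta_p(Q,k)\big/\big(1-\legendre{D}{p}p^{-m/2}\big)$. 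In the odd case, the subtlety is that the good-prime factor involves $\legendre{(-1)^{(m-1)/2}\det(Q)\,\ell}{p}=\legendre{(-1)^{(m-1)/2}\det(Q)\,k}{p}$, and since $k\det(Q)=c^2b$ this equals $\legendre{(-1)^{(m-1)/2}b}{p}=\legendre{D}{p}$ for $p\nmid 2k\det(Q)$ (the square factor $c^2$ contributing $1$, again up to bad primes). Therefore the good-prime product is
\[
\prod_{p\,\nmid\,2k\det(Q)}\Big(1+\legendre{D}{p}p^{-(m-1)/2}\Big),
\]
which one rewrites using the identity $1+\chi(p)p^{-s}=(1-\chi(p)^2p^{-2s})(1-\chi(p)p^{-s})^{-1}=(1-p^{-2s})(1-\chi(p)p^{-s})^{-1}$ valid at primes where $\chi(p)^2=1$, i.e. at $p\nmid D$. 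This turns the product into $\zeta(m-1)^{-1}\,L\big(\tfrac{m-1}{2},D\big)$ times correction factors at $p\mid D$ and at the remaining bad primes, giving the $\frac{L((m-1)/2,D)}{\zeta(m-1)}$ prefactor and the two bad-prime products in \eqref{eq3:delta(Q,-k)_n_even} once the $\delta_p(Q,k)$ from $p\mid2k\det(Q)$ are reinstated in the numerators.

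I expect the main obstacle to be bookkeeping rather than conceptual: carefully tracking which primes dividing $D$, $2k\det(Q)$, or both land in which product, making sure the ``standard'' factor one multiplies and divides by matches exactly the actual $\delta_p$ for good primes (so that all correction factors telescope to $\delta_p(Q,k)/(\text{standard factor})$ over precisely the set $p\mid 2k\det(Q)$), and in the odd case handling the square-class argument $\legendre{c^2b}{p}=\legendre{b}{p}$ together with the passage from $\legendre{b}{\cdot}$ to the Kronecker symbol $\legendre{D}{\cdot}$ at the primes dividing $2b$. Convergence of the good-prime product is automatic once it is identified with an $L$-value (absolute convergence for $m\geq 3$, which is the range of interest since $\rho>0$ requires $m=n+1\geq 3$), so no analytic subtlety arises. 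The only genuine input beyond Lemma~\ref{lem3:delta_pR} is the elementary theory of the Kronecker symbol as a real primitive character and the Euler product for $L(s,D)$, both of which may be used freely.
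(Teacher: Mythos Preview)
Your proposal is correct and follows essentially the same approach as the paper: split $\delta(Q,k)$ into the product over $p\mid 2k\det(Q)$ and $p\nmid 2k\det(Q)$, evaluate the latter via Lemma~\ref{lem3:delta_pR}, and recognize the resulting Euler factors as those of $L(m/2,D)^{-1}$ (even $m$) or of $L((m-1)/2,D)/\zeta(m-1)$ (odd $m$), correcting at the finitely many bad primes. The paper in fact omits the details here, pointing to the analogous computation in the complex case (Theorem~\ref{thm3:complex-density}) and to \cite[Thm.~12]{Ratcliffe-Tschantz}; your write-up supplies more of the bookkeeping (in particular the factorization $1+\chi(p)p^{-s}=(1-p^{-2s})(1-\chi(p)p^{-s})^{-1}$ for $p\nmid D$ and the verification that every $p\mid D$ already divides $2k\det(Q)$) but is the same argument.
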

The proof is very similar to the proof of \cite[Thm.~12]{Ratcliffe-Tschantz}.
It follows by decomposing $\delta(Q,k)$ as $\prod_{p\mid 2k\det(Q)}\delta_p(Q,k)\;\prod_{p\nmid \,2k\det(Q)}\delta_p(Q,k)$ and by applying Lemma~\ref{lem3:delta_pR}.
We will show the details in the complex case.

\subsection{Complex case}
Suppose now $\F=\C$, thus $r=2$ and $\mathcal O=\Z[\omega]$ where $\omega=\sqrt{d_{\mathcal O}}/2$ if $d_{\mathcal O}\equiv 0\pmod4$ or $\omega=(1+\sqrt{d_{\mathcal O}})/2$ if $d_{\mathcal O}\equiv1\pmod4$.

In this subsection we shall use the terminology `hermitian form' instead of $\C$-hermitian form, and we denote the form by $H$ (instead $Q$).
Actually, in the next lemma we introduce a quadratic form $Q$ associated to $H$.

\begin{lemma}\label{lem3:delta_H_C}
Let $H$ be a hermitian form over $\mathcal O$ as in \eqref{eq3:Q_diagonal} of signature $(m_1,m_2)$.
Set
\begin{equation}\label{eq3:H-Q}
Q_H=
\begin{cases}
\diag\left(
a_1,-a_1\frac{d_\mathcal O}4, \dots,a_m,-a_m\frac{d_\mathcal O}4
\right)
    &\text{ if }d_\mathcal O\equiv 0\pmod4,\\[2mm]
\diag\left(
\left(\begin{smallmatrix}a_1&a_1/2\\ a_1/2&a_1(1-d_\mathcal O)/4\end{smallmatrix} \right)
\dots
\left(\begin{smallmatrix}a_m&a_m/2\\ a_m/2&a_m (1-d_\mathcal O)/4\end{smallmatrix} \right)
\right)
    &\text{ if }d_\mathcal O\equiv 1\pmod4.
\end{cases}
\end{equation}
Then $Q_H$ is a quadratic form of signature $(2m_1,2m_2)$ (of rank $2m$) and
\[
\delta_p^\C(H,k) = \delta_p^\R(Q_H,k)
\]
for every $k\in\Z$, where the upper index $\F$ indicates the fields of definition of the forms.
\end{lemma}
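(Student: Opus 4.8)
The plan is to reduce the computation of the complex $p$-local density $\delta_p^\C(H,k)$ to a counting problem over $\Z/p^j\Z$ by writing out $\mathcal O$ in terms of the $\Z$-basis $\{1,\omega\}$. First I would observe that, as a $\Z$-module, $\mathcal O \cong \Z^2$ via $x = u + v\omega$ with $(u,v)\in\Z^2$, and that reduction mod $p^j$ behaves compatibly: $\mathcal O/p^j\mathcal O \cong (\Z/p^j\Z)^2$ as sets (and as rings), since $p^j\mathcal O = p^j\Z \oplus p^j\Z\,\omega$. Hence a vector $x=(x_1,\dots,x_m)\in(\mathcal O/p^j\mathcal O)^m$ corresponds bijectively to a vector in $(\Z/p^j\Z)^{2m}$, and the denominator $p^{j(rm-1)} = p^{j(2m-1)}$ in \eqref{eq3:p-denstity} is exactly the denominator $p^{j(2m-1)}$ appearing in the real local density of a rank-$2m$ quadratic form.

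The key step is to check that, under this identification, the hermitian value $H[x]$ equals the value $Q_H[\tilde x]$ of the stated quadratic form evaluated at the corresponding $\Z$-vector. Since $H$ is diagonal with entries $a_i\in\Z$, we have $H[x] = \sum_{i=1}^m a_i\, x_i \bar x_i = \sum_{i=1}^m a_i\, N(x_i)$, where $N$ is the norm form of $\mathcal O$. So it suffices to verify the single-variable identity $a\,N(u+v\omega) = \left(\text{the }i\text{-th block of }Q_H\right)[(u,v)]$ for each $i$, i.e.\ that $a\,N(u+v\omega)$ as a binary quadratic form in $(u,v)$ is exactly $a(u^2 - \tfrac{d_\mathcal O}{4}v^2)$ when $d_\mathcal O\equiv 0\pmod 4$ (using $\omega=\sqrt{d_\mathcal O}/2$, so $N(u+v\omega)=u^2 - \tfrac{d_\mathcal O}{4}v^2$), and $a(u^2 + uv + \tfrac{1-d_\mathcal O}{4}v^2)$ when $d_\mathcal O\equiv 1\pmod 4$ (using $\omega=(1+\sqrt{d_\mathcal O})/2$, so $\omega\bar\omega = \tfrac{1-d_\mathcal O}{4}$ and $\omega+\bar\omega=1$). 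These are the standard formulas for the principal binary quadratic form of discriminant $d_\mathcal O$; matching them against the $2\times 2$ blocks in \eqref{eq3:H-Q} (the block $\left(\begin{smallmatrix}a&a/2\\a/2&a(1-d_\mathcal O)/4\end{smallmatrix}\right)$ represents the form $a(u^2+uv+\tfrac{1-d_\mathcal O}{4}v^2)$) gives exactly $Q_H$.

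Having identified $H[x]$ with $Q_H[\tilde x]$ under the bijection $(\mathcal O/p^j\mathcal O)^m \leftrightarrow (\Z/p^j\Z)^{2m}$, the congruence $H[x]\equiv k\pmod{p^j}$ translates term-for-term into $Q_H[\tilde x]\equiv k \pmod{p^j}$, so the numerators in \eqref{eq3:p-denstity} agree for every $j$; since the denominators also agree, passing to the limit $j\to\infty$ gives $\delta_p^\C(H,k)=\delta_p^\R(Q_H,k)$. The signature claim is immediate: each block of $Q_H$ is a positive (resp.\ negative) definite binary form according as $a_i>0$ (resp.\ $a_i<0$), because its determinant is $a_i^2\cdot\tfrac{|d_\mathcal O|}{4}>0$ and its trace has the sign of $a_i$ (for $d_\mathcal O\equiv 1\pmod 4$, $(1-d_\mathcal O)/4>0$; for $d_\mathcal O\equiv 0\pmod 4$, $-d_\mathcal O/4>0$), so $Q_H$ has signature $(2m_1,2m_2)$. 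The only mildly delicate point—the ``hard part''—is making the reduction-mod-$p^j$ bijection fully rigorous when $p$ ramifies or divides the conductor-type quantities, i.e.\ checking that $\mathcal O/p^j\mathcal O\cong(\Z/p^j\Z)^2$ as a set with the quadratic form intact even for the finitely many bad primes $p$; but this is automatic since $\mathcal O$ is a free $\Z$-module of rank $2$ and $p^j\mathcal O$ is the corresponding scaled sublattice, with no condition on $p$ needed.
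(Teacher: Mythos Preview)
Your proof is correct and follows essentially the same approach as the paper: write each $z_j\in\mathcal O$ in the $\Z$-basis $\{1,\omega\}$, compute $a_j|z_j|^2$ as the corresponding binary quadratic form in the real coordinates, and use the bijection $\mathcal O/p^j\mathcal O\cong(\Z/p^j\Z)^2$ (via representatives $b_1+b_2\omega$ with $0\le b_1,b_2<p^j$) to identify the two counting problems term by term. Your write-up is in fact more explicit than the paper's, since you spell out the norm-form computation in both discriminant cases and verify the signature claim, which the paper leaves implicit.
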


\begin{proof}
We have
\[
H[z]=a_1\,|z_1|^2 + \dots + a_{n+1}\,|z_{n+1}|^2,\qquad \text{for }\; z\in\mathcal O^{n+1}.
\]
Writing  $z_j=x_{2j-1}+x_{2j}\,\omega$ with $x_{2j-1},x_{2j}\in\Z$ for each $z_j\in\mathcal O$, it follows that $H[z]$ induces the quadratic forms in \eqref{eq3:H-Q} for $x\in\Z^{2n+2}$.
Hence
\begin{align*}
\delta_p^\C(H,k)
    &=\lim_{j\to\infty} \frac{\# \left\{z\in{\big(\mathcal O /p^j\mathcal O\big)}^{m}: H[z]\equiv k\;\bmod p^j \right\}}{p^{j(2m-1)}}\\[3mm]
    &=\lim_{j\to\infty} \frac{\# \left\{x\in{\big(\Z /p^j\Z\big)}^{2m}: Q_H[x]\equiv k\;\bmod p^j \right\}}{p^{j(2m-1)}}
    =\delta_p^\R(Q_H,k),
\end{align*}
since a representative set of $\mathcal O/p^j\mathcal O$ is $\{b_1+b_2\omega:b_i\in\Z,\; 0\leq b_1,b_2<p^j\}$.
\end{proof}

\begin{remark}\label{rem3:delta-p=2}
When $p\neq2$, the matrices $\left(\begin{smallmatrix} 1&1/2\\ 1/2& (1-d_{\mathcal O})/4 \end{smallmatrix}\right)$ and $\left(\begin{smallmatrix} 1&\\ & -d_{\mathcal O}/4 \end{smallmatrix}\right)$ are equivalent over the $p$-adic integer numbers $\Z_p$ since
\[
\begin{pmatrix}1&-1/2\\ 0&1\end{pmatrix}^t
\begin{pmatrix} 1&1/2\\ 1/2& (1-d_{\mathcal O})/4 \end{pmatrix}
\begin{pmatrix}1&-1/2\\ 0&1\end{pmatrix} =
\begin{pmatrix}1&\\&-d_{\mathcal O}/4\end{pmatrix}.
\]
Consequently, both quadratic forms given in \eqref{eq3:H-Q} are equivalent over $\Z_p$ if $p\neq2$, thus their $p$-local densities representing any fixed $k$ coincide.
\end{remark}

\begin{lemma}
Let $H$ be a hermitian form over $\mathcal O$ as in \eqref{eq3:Q_diagonal}, let $k\in\Z$ and let $p$ be a positive prime number such that $p\nmid 2d_{\mathcal O} \det(H)$.
Write $k=p^c\ell$ with $p\nmid \ell$ and set $\epsilon=\legendre{d_{\mathcal O}}{p}^{m}$ and $q=p^{-(m-1)}$.
Then
\begin{equation}\label{eq3:delta_p(H,k)}
  \delta_p(H,-k) = \left(1-\epsilon p^{-m}\right) \frac{(\epsilon q)^{c+1}-1}{\epsilon q-1}.
\end{equation}
In particular, if $p\nmid k$, then
\begin{equation}\label{eq3:delta_p(H,k)_k}
  \delta_p(H,-k) = 1-\legendre{d_{\mathcal O}}{p}^{m} p^{-m}.
\end{equation}
\end{lemma}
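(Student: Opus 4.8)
The plan is to reduce the computation of the complex local density $\delta_p(H,-k)$ to the real case via Lemma~\ref{lem3:delta_H_C}, which gives $\delta_p^\C(H,-k)=\delta_p^\R(Q_H,-k)$ where $Q_H$ is the associated quadratic form of rank $2m$. First I would check that under the hypothesis $p\nmid 2d_{\mathcal O}\det(H)$ the prime $p$ does not divide $2\det(Q_H)$, so that Lemma~\ref{lem3:delta_pR} applies. Indeed, $\det(Q_H)$ is, up to sign, $\det(H)^2$ times a power of $d_{\mathcal O}$ (from the determinants of the $2\times 2$ blocks $a_j^2(1-d_{\mathcal O})/4 - a_j^2/4 = -a_j^2 d_{\mathcal O}/4$ in the $d_{\mathcal O}\equiv 1$ case, or $-a_j^2 d_{\mathcal O}/4$ directly in the $d_{\mathcal O}\equiv 0$ case), so $p\nmid 2d_{\mathcal O}\det(H)$ forces $p\nmid 2\det(Q_H)$.

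Next I would apply the even-rank case of Lemma~\ref{lem3:delta_pR} to $Q_H$, whose rank is the even number $2m$. Writing $k=p^c\ell$, the relevant quantity is $q_{Q_H}=p^{-(2m-2)/2}=p^{-(m-1)}=q$, matching the claimed $q$. For the sign, the even-rank formula requires $\epsilon_{Q_H}=\legendre{(-1)^m\det(Q_H)}{p}$; since $\det(Q_H)=(-1)^m(\det H)^2 (d_{\mathcal O}/4)^m$ up to squares (the factors of $4$ and $(\det H)^2$ are units-times-squares mod $p$ as $p\nmid 2\det H$), one gets $\legendre{(-1)^m\det(Q_H)}{p}=\legendre{d_{\mathcal O}^m}{p}=\legendre{d_{\mathcal O}}{p}^m=\epsilon$. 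Plugging $\epsilon$ and $q$ into the even-rank case of \eqref{eq3:delta_p(Q,k)} yields exactly \eqref{eq3:delta_p(H,k)}. Then \eqref{eq3:delta_p(H,k)_k} follows by setting $c=0$, which collapses the geometric-sum factor $\frac{(\epsilon q)^{c+1}-1}{\epsilon q-1}$ to $1$, leaving $1-\epsilon p^{-m}=1-\legendre{d_{\mathcal O}}{p}^m p^{-m}$.

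The main obstacle, and the only point requiring genuine care, is the bookkeeping of the determinant of $Q_H$ modulo squares and modulo $p$: one must verify that the powers of $2$ appearing in the blocks, the square factor $(\det H)^2$, and the sign $(-1)^m$ combine so that the Kronecker symbol $\legendre{(-1)^m\det(Q_H)}{p}$ simplifies precisely to $\legendre{d_{\mathcal O}}{p}^m$. This is a finite check using multiplicativity of the Kronecker symbol and the fact that squares and units coprime to $p$ contribute trivially (or as known). Everything else is a direct substitution into the already-established Lemma~\ref{lem3:delta_pR}. Alternatively, one could invoke Remark~\ref{rem3:delta-p=2} to replace, for $p\neq 2$, each $2\times 2$ block by the diagonal form $\diag(1,-d_{\mathcal O}/4)$ over $\Z_p$, so that $Q_H$ becomes $\Z_p$-equivalent to $\diag(a_1,-a_1 d_{\mathcal O}/4,\dots,a_m,-a_m d_{\mathcal O}/4)$ in all cases, making the determinant computation transparent.
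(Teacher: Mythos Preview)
Your approach is correct and follows the same overall strategy as the paper: reduce to the real case via Lemma~\ref{lem3:delta_H_C}, then evaluate the resulting real local density. The one genuine difference is in the final step. The paper, after passing to the diagonal form $Q_H'=\diag(a_1,-a_1d_{\mathcal O}/4,\dots,a_m,-a_md_{\mathcal O}/4)$ via Remark~\ref{rem3:delta-p=2}, does \emph{not} invoke Lemma~\ref{lem3:delta_pR}; instead it goes back to Yang's Theorem~3.1 and recomputes the quantities $l_i,\varepsilon_i,L(j,1),d(j),v(j)$ from scratch. Your route---feeding $Q_H'$ into the already-established even-rank case of Lemma~\ref{lem3:delta_pR}---is more economical and avoids reopening Yang's machinery, at the cost of a short determinant-and-symbol bookkeeping exercise that you outline correctly.

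One presentational point: you treat the diagonalization via Remark~\ref{rem3:delta-p=2} as an ``alternative'', but it is in fact required for your main line of argument, since Lemma~\ref{lem3:delta_pR} is stated only for diagonal forms as in \eqref{eq3:Q_diagonal}, and in the case $d_{\mathcal O}\equiv 1\pmod 4$ the form $Q_H$ is block-diagonal, not diagonal. So the cleanest write-up is: apply Lemma~\ref{lem3:delta_H_C}, then Remark~\ref{rem3:delta-p=2} to pass to $Q_H'$, then Lemma~\ref{lem3:delta_pR} with rank $2m$. With that ordering your argument is complete.
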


\begin{proof}
From Lemma~\ref{lem3:delta_H_C} we see that $\delta_p(H,k)=\delta_p^\R(Q_H,k)$ where $Q_H$ is given by \eqref{eq3:H-Q}.
Moreover, since $p\neq2$, Remark~\ref{rem3:delta-p=2} tell us that $\delta_p(H,k)=\delta_p^\R(Q_H',k)$ where
\[
Q_H'=\diag\left(
a_1,-a_1\tfrac{d_\mathcal O}4, \dots,a_m,-a_m\tfrac{d_\mathcal O}4
\right).
\]

We now compute $\delta_p^\R(Q_H',k)$ by applying \cite{Yang}.
In its notation, since $p\nmid d_\mathcal O$ and $p\nmid a_i$ for all $i$, we have $l_i=0$ for all $1\leq i\leq 2m$, $\epsilon_{2j-1}=a_j$ and $\epsilon_{2j}=-a_jd_{\mathcal O}/4$ for $1\leq j\leq m$, hence $L(j,1)=[1,2m]$ for $j$ odd and $L(j,1)=\emptyset$ for $j$ even.
In particular, $l(j,1)$ is even for any $j$.
Furthermore, $d(j)=-j(m-1)$ and $v(j)=\legendre{d_\mathcal O}{p}^{jm}=\epsilon^j$.
Theorem~3.1 in \cite{Yang} now yields that
\begin{align*}
\delta(Q,k)
    &= 1+(1-p^{-1})\sum_{j=1}^c (\epsilon q)^j- (\epsilon q)^{c+1} p^{-1}.
\end{align*}
The rest of the proof is straightforward.
\end{proof}

\begin{theorem}\label{thm3:complex-density}
Let $H$ be a hermitian form as in \eqref{eq3:Q_diagonal} over $\mathcal O$ and let $k\in\Z$.
Write $k=\pm\prod_p p^{\tau_p}$ with $\tau_p\in\N\cup\{0\}$ and $\tau_p=0$ for almost all $p$.

If $m$ is odd, then
\begin{multline}\label{eq3:delta(H,k)_m_odd}
\delta(H,k)
    =  \frac{1}{L(m,d_{\mathcal O})}
        \prod_{p\mid 2d_\mathcal O\det(H)}  \frac{\delta_p(H,k)}{1-\legendre{d_{\mathcal O}}{p} p^{-m}}\;\times
        \prod_{p\mid k \atop p\nmid 2 d_{\mathcal O} \det(H) }  \frac{1-\legendre{d_{\mathcal O}}{p}^{\tau_p+1}p^{-(\tau_p+1)(m-1)}}{1-\legendre{d_{\mathcal O}}{p}p^{-(m-1)}}.
\end{multline}

If $m$ is even, then
\begin{align}\label{eq3:delta(H,k)_m_even}
\delta(H,k)
    &= \frac{1}{\zeta(m)} \prod_{p\mid 2 d_{\mathcal O} \det(H) } \frac{\delta_p(H,k)}{1-p^{-m}}  \prod_{p\mid k \atop p\nmid 2 d_{\mathcal O} \det(H)} \frac{1-p^{-(\tau_p+1)(m-1)}}{1-p^{-(m-1)}}.
\end{align}
\end{theorem}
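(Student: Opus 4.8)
The plan is to mimic exactly the structure used for the real case in Theorem~\ref{thm3:real-density}, but now working with the hermitian form $H$ directly and invoking the bridge established in Lemma~\ref{lem3:delta_H_C}. First I would split the (absolutely convergent) product $\delta(H,k)=\prod_p\delta_p(H,k)$ into the finite product over $p\mid 2d_{\mathcal O}\det(H)$ and the product over $p\nmid 2d_{\mathcal O}\det(H)$. For the former nothing can be simplified: those factors simply remain as $\delta_p(H,k)$. For the latter, the preceding lemma gives the closed form \eqref{eq3:delta_p(H,k)} with $\epsilon=\legendre{d_{\mathcal O}}{p}^{m}$ and $q=p^{-(m-1)}$; in particular, for the (cofinitely many) primes $p\nmid k$ as well, the factor collapses to $1-\legendre{d_{\mathcal O}}{p}^{m}p^{-m}$ by \eqref{eq3:delta_p(H,k)_k}.

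The second step is to separate, inside the ``good prime'' product, the primes dividing $k$ from those not dividing $k$. For $p\nmid 2d_{\mathcal O}\det(H)k$ the factor is $1-\legendre{d_{\mathcal O}}{p}^{m}p^{-m}$, and I would recognise the product of these over all such $p$ as essentially an Euler product: when $m$ is even, $\legendre{d_{\mathcal O}}{p}^{m}=1$ and the product is $\prod_{p}(1-p^{-m})^{-1}$-type, i.e.\ $1/\zeta(m)$ up to the finitely many missing Euler factors; when $m$ is odd, $\legendre{d_{\mathcal O}}{p}^{m}=\legendre{d_{\mathcal O}}{p}$, so it is the Dirichlet $L$-series $L(m,d_{\mathcal O})^{-1}$, again up to the missing factors. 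The standard device is to multiply and divide by the missing Euler factors, which is why in \eqref{eq3:delta(H,k)_m_odd} and \eqref{eq3:delta(H,k)_m_even} the bad primes appear divided by $1-\legendre{d_{\mathcal O}}{p}p^{-m}$ (resp.\ $1-p^{-m}$): those denominators are precisely the Euler factors of $L(m,d_{\mathcal O})$ (resp.\ $\zeta(m)$) that one removed in order to write the full zeta/$L$-value out front.

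The third step handles the primes $p\mid k$ but $p\nmid 2d_{\mathcal O}\det(H)$. Writing $k=p^{\tau_p}\ell$ with $p\nmid\ell$ and feeding $c=\tau_p$ into \eqref{eq3:delta_p(H,k)}, the factor is $(1-\epsilon p^{-m})\dfrac{(\epsilon q)^{\tau_p+1}-1}{\epsilon q-1}$. The term $1-\epsilon p^{-m}$ is exactly the ``expected'' contribution already accounted for by the $\zeta(m)$ or $L(m,d_{\mathcal O})$ factor; the remaining ratio $\dfrac{(\epsilon q)^{\tau_p+1}-1}{\epsilon q-1}=\dfrac{1-(\epsilon q)^{\tau_p+1}}{1-\epsilon q}$, with $\epsilon q=\legendre{d_{\mathcal O}}{p}^{m}p^{-(m-1)}$, is the correction factor appearing explicitly in the statement — for $m$ odd with $\legendre{d_{\mathcal O}}{p}=\pm1$ raised to the appropriate powers, giving $\dfrac{1-\legendre{d_{\mathcal O}}{p}^{\tau_p+1}p^{-(\tau_p+1)(m-1)}}{1-\legendre{d_{\mathcal O}}{p}p^{-(m-1)}}$, and for $m$ even it simplifies to $\dfrac{1-p^{-(\tau_p+1)(m-1)}}{1-p^{-(m-1)}}$. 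Reassembling the three pieces, the full zeta or $L$-value comes out front, the bad primes survive with the compensating denominators, and the $k$-primes carry the ratio above.

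I do not expect a genuine obstacle here — the argument is entirely bookkeeping — but the one point to be careful about is getting the Euler-factor compensation consistent: a prime can simultaneously divide $k$ and $2d_{\mathcal O}\det(H)$, and one must be sure that such a prime is counted exactly once (it goes into the ``bad prime'' product as $\delta_p(H,k)$ with its compensating denominator, and must be excluded from the $p\mid k$, $p\nmid 2d_{\mathcal O}\det(H)$ product, which is why that product carries the side condition $p\nmid 2d_{\mathcal O}\det(H)$). With the index sets disjointly arranged, multiplying the three contributions yields \eqref{eq3:delta(H,k)_m_odd} and \eqref{eq3:delta(H,k)_m_even} directly; I would only spell out the $m$ odd case in detail and note the $m$ even case is identical with $\legendre{d_{\mathcal O}}{p}^{m}$ replaced by $1$ throughout.
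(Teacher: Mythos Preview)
Your proposal is correct and follows essentially the same route as the paper: split the Euler product into the primes dividing $2d_{\mathcal O}\det(H)$, the remaining primes dividing $k$, and the rest; use \eqref{eq3:delta_p(H,k)_k} on the cofinite tail to produce $1/\zeta(m)$ or $1/L(m,d_{\mathcal O})$ after multiplying and dividing by the missing Euler factors; and use \eqref{eq3:delta_p(H,k)} with $c=\tau_p$ to obtain the explicit ratio at the primes $p\mid k$, $p\nmid 2d_{\mathcal O}\det(H)$. The only cosmetic difference is that the paper writes out the $m$ even case and leaves $m$ odd as similar, whereas you propose the reverse.
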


\begin{proof}
From \eqref{eq3:delta_p(H,k)_k} we have
\begin{align*}
\delta(H,k)
    &=  \prod_{p\mid  2kd_{\mathcal O}\det(H)} \delta_p(H,k)
        \prod_{p\nmid 2kd_{\mathcal O}\det(H)} {\textstyle\left(1-\legendre{d_{\mathcal O}}{p}^{m}p^{-m}\right)}.
\end{align*}
Recall that $\legendre{d_{\mathcal O}}{p}=\pm1$ if $p\nmid d_{\mathcal O}$.
Thus, if $m$ is even, we obtain that
\begin{align*}
\delta_p(H,k)
    &=  \prod_{p\mid  2kd_{\mathcal O}\det(H)} \delta_p(H,k)
        \prod_{p\nmid 2kd_{\mathcal O}\det(H)} \left(1-p^{-m}\right) \\
    &=  \frac{1}{\zeta(m)}
        \prod_{p\mid  2kd_{\mathcal O}\det(H)} \frac{\delta_p(H,k)}{1-p^{-m}} \\
    &=  \frac{1}{\zeta(m)}
        \prod_{p\mid  2d_{\mathcal O}\det(H)} \frac{\delta_p(H,k)}{1-p^{-m}}
        \prod_{p\mid k \atop p\nmid  2d_{\mathcal O}\det(H)} \frac{\delta_p(H,k)}{1-p^{-m}}.
\end{align*}
Hence, \eqref{eq3:delta(H,k)_m_even} follows from \eqref{eq3:delta_p(H,k)}.

On the other hand, if $n$ is even, we have that
\begin{align*}
\delta_p(H,k)
    &=  \prod_{p\mid  2kd_{\mathcal O}\det(H)} \delta_p(H,k)
        \prod_{p\nmid 2kd_{\mathcal O}\det(H)} \left(1-\legendre{d_{\mathcal O}}{p} p^{-m}\right) \\
    &=  \frac{1}{L(n+1,d_{\mathcal O})}
        \prod_{p\mid  2d_{\mathcal O}\det(H)} \frac{\delta_p(H,k)}{1-\legendre{d_{\mathcal O}}{p} p^{-m}}
        \prod_{p\mid k \atop p\nmid 2d_{\mathcal O}\det(H)} \frac{\delta_p(H,k)}{1-\legendre{d_{\mathcal O}}{p} p^{-m}}.
\end{align*}
Finally, \eqref{eq3:delta(H,k)_m_odd} follows from \eqref{eq3:delta_p(H,k)}.
\end{proof}

\section{Numerical approximation of the error}\label{sec:approx}

The goal of this section is to approximate the behavior of
\begin{equation}\label{eq4:Psi(t)}
  \Psi(t)=\Psi_t(Q,-k)= \left|\frac{N_t(Q,-k)}{t^{2\rho}} - \coef\right|
\end{equation}
for $t\leq T$, where $T=10^6,\,10^4$ if $\F=\R,\,\C$ respectively.
Here and subsequently, we consider $k\in\N$ and $Q=\left(\begin{smallmatrix}A\\&-a\end{smallmatrix}\right)$ an $\F$-hermitian form as in \eqref{eq1:Q} where $a\in\N$ and $A$ is diagonal with positive integer entries $a_1,\dots,a_n$, that is
\begin{equation}\label{eq4:Q_diag_sign(n,1)}
Q=\begin{pmatrix}
  a_1\\ &\ddots\\ &&a_n\\ &&&-a
\end{pmatrix}.
\end{equation}

This section is divided into three parts: calculation of $C(Q,-k)$, calculation of $N_t(Q,-k)$ for every $t\leq T$ and finally, approximation of the behavior of $\Psi(t)$.

\subsection{Calculation of $\coef$}\label{subsec:calc_C(Q,-k)}
By writing the main coefficient \eqref{eq1:C(Q,-k)} as $\coef=\coeff \delta(Q,-k)$, $\coeff$ depend only on the $\F$-hermitian form $Q$. If $Q$ is as in \eqref{eq4:Q_diag_sign(n,1)} then
\begin{equation}\label{eq4:C'(Q)}
\coeff=
\begin{cases}
    \displaystyle \frac{2\,\pi^{n/2}}{(n-1)\Gamma(\frac{n}2)}
    \; \frac{a^{\frac{n-2}{2}}}{(a_1\dots a_{n})^{\frac{1}{2}}}
        &\text{ if $\F=\R$,}\\[7mm]
    \displaystyle \frac{2^{n+1}\,\pi^{n+1}}{n!\,|d_{\mathcal O}|^{\frac{n+1}2}}\;
    \frac{a^{n-1}}{a_1\dots a_{n}}
        &\text{ if $\F=\C$,}\\
\end{cases}
\end{equation}
since $r=\dim_\R(\F)$, $2\rho=n-1,\,2n$ if $\F=\R,\C$ respectively, and $\mathrm{vol}(S^{m-1})= 2\pi^{m/2}/\Gamma(m/2)$.

Furthermore, the local density $\delta(Q,-k)$ has been calculated in Section~\ref{sec:local_densities}, up to finitely many $p$-local densities $\delta_p(Q,-k)$.
However, all these terms are computed in \cite{Yang}.
In the complex case we have to use Lemma~\ref{lem3:delta_H_C}.
We usually use the \emph{advanced quadratic forms library} in Sage~\cite{Sage}, programmed by J.~Hanke and A.~Haensch.

For example, for $Q=I_{4,1}$ and $k=1$ considered in Example~\ref{ex1:Q=I_41} we have that $\coeff[I_{4,1}]=2\pi^2/3$ by \eqref{eq4:C'(Q)}  and, by Theorem~\ref{thm3:real-density},
\[
\delta(I_{4,1},-1)=\frac{L(2,4)}{\zeta(4)} \; \frac{\delta_2(I_{4,1},-1)}{1-2^{-4}} = \frac{\pi^2/8}{\pi^4/90}\;\frac{5/8}{15/2^{4}}=\frac{15}{2}\pi^{-2}.
\]
Hence
$
\coef[I_{4,1},-1]=5.
$
In Example~\ref{ex1:H=I_51} we take $H=I_{5,1}$ over $\Z[\frac{1+\sqrt{-3}}{2}]$ and $k=1$, then we have that $\coeff=(2\pi)^6/(5!\,\,3^3)$ by \eqref{eq4:C'(Q)} and $\delta(I_{5,1},-1) = 1/\zeta(6)\,.\, \delta_2(I_{5,1},-1)/(1-2^{-6})\,.\, \delta_3(I_{5,1},-1)/(1-3^{-6})$ by \eqref{eq3:delta(H,k)_m_odd}.
We check at once that $\coef[I_{5,1},-1]=18$.

\subsection{Calculation of $N_t(Q,-k)$}
We consider the functions $F_{A,\mathcal O}(m) := \#\{x\in\mathcal O^n:A[x]=l\} $ and $G_{\mathcal O}(m) := \#\{z\in\mathcal O: |z|^2=m\}$.
It follows by definition \eqref{eq1:N_t} that
\begin{equation}\label{eq4:N_t=sumF_A}
N_t(Q,-k)=\sum_{m=1}^{[t^2]}  F_A(am-k)\; G_{\mathcal O}(m).
\end{equation}
In the real case, \eqref{eq4:N_t=sumF_A} reduces as $N_t(Q,-k)=2\,\sum_{m=1}^{[t]}  F_A(am^2-k)$.

There exist formulas for $F_{A,\mathcal O}(m)$ and $G_{\mathcal O}(m)$ for some choices of $A$ and $\mathcal O$, that allow us to have fast algorithms to compute $N_t(Q,-k)$ for large $t$.
For example, we have used Jacobi's four-square formula
\[
F_{I_4,\Z}(m)= \sum_{d\mid m,\, 4\nmid d} \, d
\]
in Example~\ref{ex1:Q=I_41}.
The other formulas for $F_A(m)$ that we use here can find in \cite{Williams08}, \cite{Williams10} and \cite{Otremba}.

\subsection{Estimation of the decay order of $\Psi_t(Q,-k)$} \label{subsec:est_sigma}
We have computed in the last two subsections the necessary elements to obtain the function $\Psi(t)$ given in \eqref{eq4:Psi(t)}.
This function goes to zero when $t\to+\infty$.
Our purpose is to estimate the order of decay of this function.
The method used here is new as far as we know.

We consider the set of points in the plane given by
\[
\mathcal E=\left\{\big(m,\Psi(m)\big) \in [\![1,T]\!]\times\R: \Psi(m)> \Psi(l) \;\;\forall\, l>m\right\}.
\]
Now, by the ``least square fitting--power law'' method, we approximate these points by a curve $y(t)=B t^{\sigma}$ with $B>0$ and $\sigma<0$.
This method works as follows:
if $\{(x_i,y_i)\}_{1\leq i\leq h}$ are points with positive entries, then $B$ and $\sigma$ are given by
\begin{align*}
\sigma &= \frac
    {\displaystyle h\sum_{i=1}^h (\log x_i\log y_i) -\sum_{i=1}^h (\log x_i)\sum_{i=1}^h(\log y_i)}
    {\displaystyle h\sum_{i=1}^h (\log x_i)^2 - \Big(\sum_{i=1}^h \log x_i\Big)^2},\\
B &= \dfrac1h\left(\displaystyle \sum_{i=1}^h (\log y_i) - \sigma \sum_{i=1}^h (\log x_i)\right).
\end{align*}
Finally, our estimate of the decay order of $\Psi(t)$ is $O(t^\sigma)$.

For instance, in Example~\ref{ex1:Q=I_41} we obtain the curve $y(t)=13\,t^{-0.93}$ and in Example~\ref{ex1:H=I_51} the curve $y(t)=56\, t^{-1.36}$.

\section{Results and conclusions}\label{sec:conclusion}
We conclude the paper by showing the experimental results in several tables, together with some conclusions.
Recall that formula \eqref{eq1:main-form} proved in \cite{Lauret12} implies that $\Psi(t)=\Psi_t(Q,-k)\ll t^{\tau-2\rho}$ where $\tau$ is defined by \eqref{eq1:tau}.
Hence $\lim_{t\to\infty}\Psi(t)=0$ since $\tau<2\rho$.

In Corollary~\ref{cor2:cota_tau_R} and Remark~\ref{rem2:cota_tau_C} we obtain that
\[
\tau-2\rho\leq \Upsilon:=
\begin{cases}
  -1/3  &\quad\text{if $\F=\R$ and $n=2$},\\
  -1/2  &\quad\text{if $\F=\R$ and $n\geq3$},\\
  -1    &\quad\text{if $\F=\C$ and $n\geq2$}.
\end{cases}
\]
Recall that, in the complex case, we are assuming the extended result of Theorem~\ref{thm2:lambda_1_C}.
Similarly, from \eqref{eq1:tau_sin_autov_excep}, by assuming that condition \eqref{eq1:weaker_cond} holds (e.g.\ there are no exceptional eigenvalues) then
\begin{align*}
\tau-2\rho=\Omega :=&\,
\begin{cases}
  -1+\dfrac{2}{n+1}
    &\quad\text{if $\F=\R$},\\[4mm]
  -2+\dfrac{2}{n+1}+\varepsilon
    &\quad\text{if $\F=\C$},
\end{cases}
\end{align*}
for each $\varepsilon>0$.
On the other hand, we have estimated the decay order of $\Psi_t(Q,-k)$ by a curve $y(t)=B\, t^\sigma$ (see Subsection~\ref{subsec:est_sigma}).
Now, our purpose is to compare the number $\sigma$ with $\Upsilon$ and $\Omega$.

In Tables~\ref{table:F=R_A=(1,1)_k=1}, \ref{table:F=R_A=(1,1,1,1)_k=1}, \ref{table:F=R_A=(1,1,1,1,1,1)_k=1} and \ref{table:F=R_A=(1,1,1,1,1,1,1,1)_k=1} we consider the $\R$-hermitian forms (or quadratic forms)
\[
Q=
\begin{pmatrix}
I_n\\&-a
\end{pmatrix}\qquad \text{with }\quad1\leq a\leq 15,
\]
for $n=2,4,6,8$ respectively ($I_n$ denotes the $n\times n$-identity matrix).
Similarly, Tables~\ref{table:F=C_d=-3_A=(1,1)_k=1}, \ref{table:F=C_d=-3_A=(1,1,1)_k=1}, \ref{table:F=C_d=-3_A=(1,1,1,1)_k=1}, \ref{table:F=C_d=-3_A=(1,1,1,1,1)_k=1} consider the $\C$-hermitian forms (or just hermitian forms) $H=\diag(I_n,-a)$ with $1\leq a\leq 15$, over the ring $\mathcal O=\Z[\sqrt{-3}]$ for $n=2,3,4,5$ respectively.
In each table is fixed $k=1$, and $T=10^6$ if $\F=\R$ and $T=10^4$ if $\F=\C$.

{
\setlength{\arraycolsep}{3mm}

We abbreviate the results from Tables~\ref{table:F=R_A=(1,1)_k=1}--\ref{table:F=C_d=-3_A=(1,1,1,1,1)_k=1} as follows:
\begin{equation}\label{eq5:resumen_tablas}
\begin{array}{ccccr@{}lc}
\F& n&\multicolumn{1}{c}{\Upsilon_{n,\F}}&\multicolumn{1}{c} {\Omega_{n,\F}}& \multicolumn{2}{c}{\sigma\text{ in}} &\text{Table}\\ \hline\hline
\rule{0pt}{2.5ex}
&2 &-0.33 & -0.33 & [-0.68&,-0.50]   & \ref{table:F=R_A=(1,1)_k=1}              \\[1mm]
&4 &-0.50 & -0.60 & [-0.98&,-0.93]   & \ref{table:F=R_A=(1,1,1,1)_k=1}          \\[1mm]
\raisebox{1.5ex}[0pt]{$\R$}
&6 &-0.50 & -0.71 & [-0.999&,-0.991] & \ref{table:F=R_A=(1,1,1,1,1,1)_k=1}      \\[1mm]
&8 &-0.50 & -0.78 & [-1.002&,-0.9976]& \ref{table:F=R_A=(1,1,1,1,1,1,1,1)_k=1}  \\[1mm] \hline
\rule{0pt}{2.5ex}
&2 &-1 & -1.33 & [-1.40&,-1.33] & \ref{table:F=C_d=-3_A=(1,1)_k=1}              \\[1mm]
&3 &-1 & -1.50 & [-1.37&,-1.32] & \ref{table:F=C_d=-3_A=(1,1,1)_k=1}            \\[1mm]
\raisebox{1.5ex}[0pt]{$\C$}
&4 &-1 & -1.60 & [-1.37&,-1.33] & \ref{table:F=C_d=-3_A=(1,1,1,1)_k=1}          \\[1mm]
&5 &-1 & -1.67 & [-1.37&,-1.34] & \ref{table:F=C_d=-3_A=(1,1,1,1,1)_k=1}        \\[1mm] \hline
\end{array}
\end{equation}
}

We divide the conclusions according to $\sigma$ is larger or not than $\Omega$, or equivalently, the numerical data gives evidences of exceptional spectrum or of a new lower bound for $\lambda_1$.

\subsection{Evidences of exceptional spectrum}
We see from Tables~\ref{table:F=C_d=-3_A=(1,1,1)_k=1}--\ref{table:F=C_d=-3_A=(1,1,1,1,1)_k=1} that $\sigma>\Omega_{n,\C}$ for any $n=3,4,5$, hence these approximations say that \eqref{eq1:weaker_cond} should not hold, in particular, there are exceptional eigenvalues in $\Gamma_Q^0\ba\mathrm H_\C^n$. This numerical data supports Conjecture~\ref{conj1:existence}.

Let $\upsilon=\upsilon_{n,\C}= 2n-1$ and $\omega=\omega_{n,\C} = 4n^3/(n+1)^2$.
Hence, $\lambda_1\geq\upsilon$ is the extended lower bound proved in Theorem~\ref{thm2:lambda_1_C} and $\lambda_1\geq \omega$ is the condition \eqref{eq1:weaker_cond}.
When $n=2$, since $\sigma<\Omega_{2,\C}$ then $\lambda_1\geq \omega$.
For $3\leq n\leq 5$, $\sigma$ lies (approximately) in $[-1.4,-1.33]$ by \eqref{eq5:resumen_tablas}.
Hence, these approximations tell us that $\tau-2\rho = \sqrt{\rho^2-\lambda_1}-\rho$ lies in the same interval, or equivalently, $\lambda_1$ lies in the intervals showed as follows:
\begin{equation}\label{eq5:values_spectrum_complejo}
\begin{array}{c@{\qquad}c@{\qquad}c@{\qquad}r@{\;}l@{\qquad}c}
n & \upsilon & \omega & \multicolumn{2}{c}{\lambda_1\;\text{ in}\qquad} & \rho^2\\ \hline
2 & 3 &  3.55 & [ 3.55,& 4]  &  4 \\
3 & 5 &  6.75 & [ 6.22,& 6.44]  &  9 \\
4 & 7 & 10.24 & [ 8.88,& 9.24]  & 16 \\
5 & 9 & 13.88 & [11.55,&12.04]  & 25 \\
\end{array}
\end{equation}
\setlength{\unitlength}{9mm}
\newcommand{\raya}{\circle*{0.1}}
\begin{center}
\begin{picture}(10,4.05)
\multiput(0,0.5)(0,1){4}{\line(1,0){10}}
\multiput(0,0.5)(0,1){4}{\raya}
\multiput(-0.1,0)(0,1){4}{$0$}
\multiput(10,0.5)(0,1){4}{\raya}
\put(9.9,3){$\rho^2$}
\put(9.9,2){$\rho^2$}
\put(9.9,1){$\rho^2$}
\put(9.9,0){$\rho^2$}
\put(-1.5,0.4){$n=5$}
\put(-1.5,1.4){$n=4$}
\put(-1.5,2.4){$n=3$}
\put(-1.5,3.4){$n=2$}
\put(8.88,3.5){\raya}   \put(8.78,3){$\omega$}
\put(7.50,2.5){\raya}   \put(7.40,2){$\omega$}
\put(6.40,1.5){\raya}   \put(6.30,1){$\omega$}
\put(5.55,0.5){\raya}   \put(5.45,0){$\omega$}
\put(7.500,3.5){\raya}  \put(7.400,3){$\upsilon$}
\put(5.550,2.5){\raya}  \put(5.450,2){$\upsilon$}
\put(4.375,1.5){\raya}  \put(4.275,1){$\upsilon$}
\put(3.600,0.5){\raya}  \put(3.500,0){$\upsilon$}
\put(9.44  ,3.5){\oval(1.12  ,0.2)} \put(9.34  ,3){$\lambda_1$}
\put(7.0345,2.5){\oval(0.2419,0.2)} \put(6.93  ,2){$\lambda_1$}
\put(5.6625,1.5){\oval(0.2194,0.2)} \put(5.5625,1){$\lambda_1$}
\put(4.7191,0.5){\oval(0.1937,0.2)} \put(4.618 ,0){$\lambda_1$}
\end{picture}
\end{center}

\begin{table}
\begin{minipage}[t]{0.48\textwidth}
\caption{}\label{table:F=R_A=(1,1)_k=1}
$Q=\diag(I_2,-a)\quad  1\leq a\leq 15$,\\
$\F=\R,\; n=2,\; T=10^6,$\\[1mm]
$\Upsilon_{2,\R}=-0.33, \qquad \Omega_{2,\R}=-0.33.$\\[4mm]
$
\begin{array}{cccc}
a&C(Q,-1)&\Psi(T)&\sigma \\ \hline
1 & 2                       & 0.000492 & -0.51 \\
2 & 4\sqrt{2}               & 0.001647 & -0.56 \\
3 & 4\sqrt{3}               & 0.000691 & -0.65 \\
4 & 0\\
5 & \frac{4}{3}\sqrt{5}     & 0.000427 & -0.68 \\
6 & 4\sqrt{6}               & 0.002946 & -0.60 \\
7 & 0\\
8 & 0\\
9 & 4                       & 0.001732 & -0.60 \\
10 & \frac{4}{3}\sqrt{10}   & 0.001229 & -0.57 \\
11 & \frac{12}{5}\sqrt{11}  & 0.002404 & -0.57 \\
12 & 0\\
13 & \frac{4}{7}\sqrt{13}   & 0.001848 & -0.50 \\
14 & \frac{8}{3}\sqrt{14}   & 0.003855 & -0.50 \\
15 & 0
\end{array}
$
\end{minipage}
\begin{minipage}[t]{0.48\textwidth}
\caption{}\label{table:F=R_A=(1,1,1,1)_k=1}
$Q=\diag(I_4,-a)\quad  1\leq a\leq 15$,\\
$\F=\R,\; n=4,\; T=10^6,$\\[1mm]
$\Upsilon_{4,\R}=-0.5, \qquad \Omega_{4,\R}=-0.6.$\\[4mm]
$
\begin{array}{cccc}
a&C(Q,-1)&\Psi(T)&\sigma \\ \hline
1 & 5                           & 0.000004 & -0.93 \\
2 & 8\sqrt{2}                   & 0.000015 & -0.97 \\
3 & 12\sqrt{3}                  & 0.000032 & -0.98 \\
4 & 32                          & 0.000039 & -0.95 \\
5 & \frac{140}{13}\sqrt{5}      & 0.000030 & -0.98 \\
6 & \frac{72}{5}\sqrt{6}        & 0.000059 & -0.98 \\
7 & \frac{112}{5}\sqrt{7}       & 0.000082 & -0.98 \\
8 & 32\sqrt{2}                  & 0.000060 & -0.97 \\
9 & 36                          & 0.000008 & -0.97 \\
10 & \frac{280}{13}\sqrt{10}    & 0.000072 & -0.98 \\
11 & \frac{1540}{61}\sqrt{11}   & 0.000164 & -0.97 \\
12 & \frac{192}{5}\sqrt{3}      & 0.000101 & -0.97 \\
13 & \frac{364}{17}\sqrt{13}    & 0.000073 & -0.97 \\
14 & \frac{112}{5}\sqrt{14}     & 0.000136 & -0.97 \\
15 & \frac{360}{13}\sqrt{15}    & 0.000200 & -0.98
\end{array}
$
\end{minipage}
\\[10mm]
\begin{minipage}[t]{0.48\textwidth}
\caption{}\label{table:F=R_A=(1,1,1,1,1,1)_k=1}
$Q=\diag(I_6,-a)\quad  1\leq a\leq 15$,\\
$\F=\R,\; n=6,\; T=10^6,$\\[1mm]
$\Upsilon_{6,\R}=-0.50, \qquad \Omega_{6,\R}=-0.71.$\\[4mm]
$
\begin{array}{cccc}
a&C(Q,-1)&\Psi(T)&\sigma \\ \hline
1 & \frac{27}{4}                    & 0.000016 & -0.992 \\\
2 & 18\sqrt{2}                      & 0.000048 & -0.998 \\\
3 & \frac{945}{26}\sqrt{3}          & 0.000135 & -0.998 \\\
4 & 120                             & 0.000327 & -0.991 \\\
5 & \frac{1125}{14}\sqrt{5}         & 0.000400 & -0.998 \\\
6 & \frac{1242}{13}\sqrt{6}         & 0.000450 & -0.999 \\\
7 & \frac{7252}{57}\sqrt{7}         & 0.000832 & -0.998 \\\
8 & 360\sqrt{2}                     & 0.001283 & -0.999 \\
9 & \frac{15309}{26}                & 0.001346 & -0.996 \\
10 & \frac{3950}{21}\sqrt{10}       & 0.001198 & -0.998 \\
11 & \frac{9801}{38}\sqrt{11}       & 0.001887 & -0.997 \\
12 & \frac{8640}{13}\sqrt{3}        & 0.002874 & -0.998 \\
13 & \frac{689013}{2198}\sqrt{13}   & 0.002703 & -0.997 \\
14 & \frac{6468}{19}\sqrt{14}       & 0.002454 & -0.998 \\
15 & \frac{5550}{13}\sqrt{15}       & 0.003827 & -0.998 \\
\end{array}
$
\end{minipage}
\begin{minipage}[t]{0.48\textwidth}
\caption{}\label{table:F=R_A=(1,1,1,1,1,1,1,1)_k=1}
$Q=\diag(I_8,-a)\quad  1\leq a\leq 15$,\\
$\F=\R,\; n=8,\; T=10^6,$\\[1mm]
$\Upsilon_{8,\R}=-0.5, \qquad \Omega_{8,\R}=-0.78.$\\[4mm]
$
\begin{array}{cccc}
a&C(Q,-1)&\Psi(T)&\sigma\\ \hline
1 & \frac{685}{136}                     & 0.000018 & -0.9976  \\
2 & \frac{440}{17}\sqrt{2}              & 0.000127 & -0.9999  \\
3 & \frac{46575}{697}\sqrt{3}           & 0.000377 & -1.0000  \\
4 & \frac{5120}{17}                     & 0.001031 & -0.9991  \\
5 & \frac{84375}{313}\sqrt{5}           & 0.002218 & -1.0001  \\
6 & \frac{281880}{697}\sqrt{6}          & 0.003478 & -1.0001  \\
7 & \frac{11627700}{20417}\sqrt{7}      & 0.004801 & -1.0001  \\
8 & \frac{28160}{17}\sqrt{2}            & 0.008191 & -1.0002  \\
9 & \frac{2496825}{697}                 & 0.013384 & -0.9996  \\
10 & \frac{7885000}{5321}\sqrt{10}      & 0.015999 & -1.0001  \\
11 & \frac{214923225}{124457}\sqrt{11}  & 0.018761 & -0.9999  \\
12 & \frac{3179520}{697}\sqrt{3}        & 0.027650 & -1.0001  \\
13 & \frac{43006275}{14281}\sqrt{13}    & 0.039000 & -1.0001  \\
14 & \frac{68682320}{20417}\sqrt{14}    & 0.044185 & -1.0002  \\
15 & \frac{815568750}{218161}\sqrt{15}  & 0.047254 & -1.0001  \\
\end{array}
$
\end{minipage}
\end{table}

\begin{table}
\begin{minipage}[t]{0.48\textwidth}
\caption{}\label{table:F=C_d=-3_A=(1,1)_k=1}
$Q=\diag(I_2,-a)\quad  1\leq a\leq 15$,\\
$\F=\C,\; \mathcal O=\Z[\sqrt{-3}],\; n=2,\; T=10^4,$\\[1mm]
$\Upsilon_{2,\C}=-1, \qquad \Omega_{2,\C}=-1.33.$\\[4mm]
$
\begin{array}{cccc}
a&C(Q,-1)&\Psi(T)&\sigma\\ \hline
1 & 18                  & 0.000039 & -1.33  \\
2 & 48                  & 0.000061 & -1.34  \\
3 & 108                 & 0.000102 & -1.36  \\
4 & 48                  & 0.000147 & -1.37  \\
5 & \frac{1200}{7}      & 0.000050 & -1.35  \\
6 & 144                 & 0.000255 & -1.34  \\
7 & \frac{2352}{19}     & 0.000196 & -1.40  \\
8 & 192                 & 0.000243 & -1.35  \\
9 & 324                 & 0.000100 & -1.37  \\
10 & \frac{800}{7}      & 0.000296 & -1.35  \\
11 & \frac{14520}{37}   & 0.000686 & -1.36  \\
12 & 288                & 0.000345 & -1.34  \\
13 & \frac{14196}{61}   & 0.000571 & -1.36  \\
14 & \frac{6272}{19}    & 0.000388 & -1.37  \\
15 & \frac{3600}{7}     & 0.000407 & -1.37  \\
\end{array}
$
\end{minipage}
\begin{minipage}[t]{0.48\textwidth}
\caption{}\label{table:F=C_d=-3_A=(1,1,1)_k=1}
$Q=\diag(I_3,-a)\quad  1\leq a\leq 15$,\\
$\F=\C,\; \mathcal O=\Z[\sqrt{-3}],\; n=3,\; T=10^4,$\\[1mm]
$\Upsilon_{3,\C}=-1, \qquad \Omega_{3,\C}=-1.5.$\\[4mm]
$
\begin{array}{cccc}
a&C(Q,-1)&\Psi(T)&\sigma\\ \hline
1 & 30                  & 0.000085 & -1.35  \\
2 & \frac{576}{5}       & 0.000349 & -1.32  \\
3 & 324                 & 0.000884 & -1.35  \\
4 & 576                 & 0.001443 & -1.35  \\
5 & \frac{7875}{13}     & 0.002199 & -1.36  \\
6 & \frac{7776}{5}      & 0.003006 & -1.36  \\
7 & \frac{58653}{40}    & 0.004731 & -1.36  \\
8 & \frac{9216}{5}      & 0.005813 & -1.34  \\
9 & 2916                & 0.007188 & -1.36  \\
10 & \frac{47250}{13}   & 0.008708 & -1.35  \\
11 & \frac{886446}{305} & 0.010820 & -1.36  \\
12 & \frac{31104}{5}    & 0.011752 & -1.37  \\
13 & \frac{1206153}{238}& 0.014859 & -1.35  \\
14 & \frac{703836}{125} & 0.017702 & -1.34  \\
15 & \frac{212625}{26}  & 0.022768 & -1.36  \\
\end{array}
$
\end{minipage}
\\[10mm]
\begin{minipage}[t]{0.48\textwidth}
\caption{}\label{table:F=C_d=-3_A=(1,1,1,1)_k=1}
$Q=\diag(I_4,-a)\quad  1\leq a\leq 15$,\\
$\F=\C,\; \mathcal O=\Z[\sqrt{-3}],\; n=4,\; T=10^4,$\\[1mm]
$\Upsilon_{4,\C}=-1, \qquad \Omega_{4,\C}=-1.6.$\\[4mm]
$
\begin{array}{cccc}
a&C(Q,-1)&\Psi(T)&\sigma\\ \hline
1 & 30                          & 0.000052 & -1.36 \\
2 & \frac{1920}{11}             & 0.000468 & -1.33 \\
3 & 648                         & 0.001536 & -1.35 \\
4 & \frac{19200}{11}            & 0.003596 & -1.35 \\
5 & \frac{1560000}{521}         & 0.006766 & -1.35 \\
6 & \frac{51840}{11}            & 0.012691 & -1.36 \\
7 & \frac{28812000}{2801}       & 0.018688 & -1.36 \\
8 & \frac{122880}{11}           & 0.029712 & -1.36 \\
9 & 17496                       & 0.039107 & -1.36 \\
10 & \frac{156000000}{5731}     & 0.057215 & -1.36 \\
11 & \frac{428688480}{13421}    & 0.076104 & -1.36 \\
12 & \frac{414720}{11}          & 0.100624 & -1.36 \\
13 & \frac{2039255400}{30941}   & 0.111032 & -1.37 \\
14 & \frac{1843968000}{30811}   & 0.158689 & -1.37 \\
15 & \frac{42120000}{521}       & 0.189703 & -1.36 \\
\end{array}
$
\end{minipage}
\begin{minipage}[t]{0.48\textwidth}
\caption{}\label{table:F=C_d=-3_A=(1,1,1,1,1)_k=1}
$Q=\diag(I_5,-a)\quad  1\leq a\leq 15$,\\
$\F=\C,\; \mathcal O=\Z[\sqrt{-1}],\; n=5,\; T=10^4,$\\[1mm]
$\Upsilon_{5,\C}=-1, \qquad \Omega_{5,\C}=-1.67.$\\[4mm]
$
\begin{array}{cccc}
a&C(Q,-1)&\Psi(T)&\sigma\\ \hline
1 & 18                          & 0.000059 & -1.36  \\
2 & \frac{4224}{13}             & 0.000920 & -1.34  \\
3 & \frac{17496}{13}            & 0.004840 & -1.35  \\
4 & \frac{33792}{7}             & 0.014810 & -1.35  \\
5 & \frac{4884375}{403}         & 0.036694 & -1.36  \\
6 & \frac{2052864}{91}          & 0.075213 & -1.36  \\
7 & \frac{141229221}{3268}      & 0.142486 & -1.37  \\
8 & \frac{1081344}{13}          & 0.234356 & -1.36  \\
9 & \frac{1417176}{13}          & 0.387823 & -1.36  \\
10 & \frac{286550000}{1519}     & 0.577332 & -1.37  \\
11 & \frac{12968792826}{45695}  & 0.866876 & -1.37  \\
12 & \frac{32845824}{91}        & 1.201558 & -1.36  \\
13 & \frac{34464530139}{67039}  & 1.680102 & -1.37  \\
14 & \frac{8285447632}{10621}   & 2.199171 & -1.37  \\
15 & \frac{2373806250}{2821}    & 3.014865 & -1.36  \\
\end{array}
$
\end{minipage}
\end{table}

\subsection{Evidences on a lower bound for $\lambda_1$}
We now restrict our attention when $\sigma$ is smaller than $\Omega$ and $\Upsilon$, namely, the cases $\F=\R$ with $n=2,4,6,8$ and $\F=\C$ with $n=2$.
As we explained in Introduction, these data tell us that the error term of \eqref{eq1:main-form} is better than the error term expected by the lattice point theorem, even by assuming the nonexistence of exceptional eigenvalues in $\Gamma_Q^0\ba\mathrm H_\R^n$.
Therefore, these computations are numerical evidence that supports Conjecture~\ref{conj1:lower_bound} in these cases.

Some ``noise'' is visible in the computation of $n=2$.
Actually, the amplitude of the values taken by $\sigma$ is dramatically smaller as $n$ grows.
This noise makes us doubt about how good is the estimation $O(t^\sigma)$ of the error term in \eqref{eq1:main-form} when $n=2$.
However, we do not conjecture anything in this case ($n=2$) since the lower bound obtained in \cite{Kim-Sarnak} gives the best error term in \eqref{eq1:main-form} that we can obtain form \cite{Lauret12}.
In other words, when $\F=\R$ and $n=2$, we are showing evidences for the lower bound \eqref{eq1:weaker_cond}, namely $\lambda_1\geq 2/9=0.22\cdots$, which was already proved by Kim and Sarnak~\cite{Kim-Sarnak}, namely $\lambda_1\geq 975/4096=0.238\cdots$.
Perhaps, the complications that emerge in this case ($n=2$) are connected with the same complexities of the \emph{Gauss circle problem}.

We actually consider some other $\F$-hermitian forms that are not included here.
They are $\diag(1,1,2,2,-a)$, $\diag(1,1,1,4,-a)$, $\diag(1,1,3,3,-a)$, $\diag(1,2,2,4,-a)$, $\diag(1,4,4,4,-a)$, $\diag(1,1,4,4,-a)$, $\diag(1,1,1,1,2,2,-a)$ and $\diag(1,1,2,2,2,2,-a)$ when $\F=\R$ and also $\diag(I_n,-a)$ over $\Z[\sqrt{-1}]$ for $n=2,3,4$ when $\F=\C$, with $1\leq a \leq 15$.
We have used formulas for $F_A(m)$ from \cite{Williams07} and \cite{Williams08}.
The numerical results in these cases are very similar those exposed here.

\section*{Acknowledgments}
The author sincerely thanks Roberto Miatello for many helpful discussions concerning the material in this paper.

\bibliographystyle{plain}

\end{document}